\newtheorem{theorem}{Theorem}
\newtheorem{prop}[theorem]{Proposition}
\newtheorem{Remark}[theorem]{Remark}
\newcommand{\R}{\mathbb{R}}
\newcommand{\rmO}{\mathrm{O}}
\newcommand{\com}[1]{}
\def\@email#1#2{%
 \endgroup
 \patchcmd{\titleblock@produce}
  {\frontmatter@RRAPformat}
  {\frontmatter@RRAPformat{\produce@RRAP{*#1\href{mailto:#2}{#2}}}\frontmatter@RRAPformat}
  {}{}
}%
\begin{document}

\preprint{AIP/123-QED}

\title[Linearly-perturbed May--Leonard model]{Dynamics of a linearly-perturbed May--Leonard competition model}
\author{Gabriela Jaramillo}
\affiliation{Department of Mathematics, University of Houston, Houston, TX, 77204, USA}
\author{Lidia Mrad}
\affiliation{Department of Mathematics and Statistics, Mount Holyoke College,  South Hadley, MA, 01075, USA}
\author{Tracy L.\ Stepien}
\affiliation{Department of Mathematics, University of Florida, Gainesville, FL, 32611, USA}

\email{gabriela@math.uh.edu ; lmrad@mtholyoke.edu ; tstepien@ufl.edu}

\date{\today}

\begin{abstract}
The May--Leonard model was introduced to examine the behavior of three competing populations where rich dynamics, such as limit cycles and nonperiodic cyclic solutions, arise. In this work, we perturb the system by adding the capability of global mutations, allowing one species to evolve to the other two in a linear manner. We find that for small mutation rates the perturbed system not only retains some of the dynamics seen in the classical model, such as the three-species equal-population equilibrium bifurcating to a limit cycle, but also exhibits new behavior. For instance, we capture curves of fold bifurcations where pairs of equilibria emerge and then coalesce. As a result, we uncover parameter regimes with new types of stable fixed points that are distinct from the single- and dual-population equilibria characteristic of the original model. On the contrary, the linearly-perturbed system fails to maintain heteroclinic connections that exist in the original system. In short, a linear perturbation proves to be significant enough to substantially influence the dynamics, even with small mutation rates.
\end{abstract}

\maketitle

\begin{quotation}
Almost 50 years ago, May and Leonard~\cite{may-leonard} introduced an extension of the classical Lotka--Volterra nonlinear system to examine the long-term dynamics of three competing populations. In their work, they found that solutions exhibit three distinct behaviors  depending on the parameter values chosen, with the system approaching either a stable fixed point, a periodic orbit or, even more interestingly, what is now known to be a heteroclinic cycle. In the latter case, the observed trajectories are characterized by nonperiodic oscillations of bounded amplitude but ever increasing cycle time. 
Here, we establish and study an extended May--Leonard model by including a linear perturbation that represents the ability of each species to adopt a competing strategy. We find that incorporating the linear perturbation increases the number of physically-relevant equilibrium states for certain parameter values. In addition, we also find that the region in parameter space where periodic orbits exist is much larger than in the case of the original May--Leonard equations, and that the system no longer exhibits nonperiodic cyclic solutions. Therefore, allowing for a small linear mutation term representing global mutations foments coexistence of different species. In biological terms, this would imply that equipping populations with the possibility of switching from one strategy to another with
a small transition or mutation rate can favor biodiversity.
\end{quotation}

\section{Introduction} \label{sec:introduction}
During the last half a century, work on the May--Leonard model~\cite{may-leonard}, a population dynamics model of three competing species, and its variations has led to a variety of results. In particular, Schuster et al.~\cite{schuster-etal} described  the $\omega$-limit set of the original model and proved the existence of a
heteroclinic cycle, while Tang et al.~\cite{tang-etal} constructed a Lyapunov function to find the basin of attraction. It was also determined by Gaunersdorfer~\cite{gaunersdorfer} that the time averages of the trajectories tending to the heteroclinic orbits in the model do not converge but spiral to the boundary of a polygon. Approximate analytic solutions to the system were also derived by Phillipson et al.~\cite{phillipson-etal} and conditions under which the system is integrable have been studied by Leach and Miritzis~\cite{leach-miritzis}, Llibre and Valls~\cite{llibre-valls}, and Bl\'{e} et al.~\cite{ble-etal}. 

Extensions of the May--Leonard model~\cite{may-leonard} have included incorporating asymmetric competitive effects in order to determine conditions for existence and stability of limit cycles and nonperiodic oscillations, as well as existence of first integrals of the Darboux type (Schuster et al.~\cite{schuster-etal}, Chi et al.~\cite{chi-etal}, Wolkowicz~\cite{wolkowicz}, Antonov et al.~\cite{antonov-etal-2016,antonov-etal-2019}). Instead of requiring equal intrinsic growth rates for each competing population as in the May--Leonard model~\cite{may-leonard}, existence of Hopf bifurcations and the stability of steady states were studied under the assumption of unequal intrinsic growth rates (Coste et al.~\cite{coste-etal}, Zeeman~\cite{zeeman}, van der Hoff et al.~\cite{vanderhoff-etal}). Park~\cite{park-2021} extended the model to include an external influx and efflux of individuals into each population. Balanced flow among the groups resulted in persistent coexistence of all groups, including cases with oscillatory dynamics, while imbalanced flow resulted in various population survival states. More examples of various general three-species competition models can be found in the review paper by Dobramysl et al.~\cite{dobramysl-review-2018}.

The same year that the May--Leonard model~\cite{may-leonard} was published, Gilpin~\cite{gilpin} considered the effects of adding a constant perturbation to these equations. He found that this constant term allowed for the formation of limit cycles in regions of parameter space where the original model exhibited only nonperiodic oscillations. Other types of perturbations have not been considered until more recently. For example, in 2014, Zhao and Cen~\cite{zhao-cen} showed that adding small quadratic perturbations to the model results in exactly one or two limit cycles bifurcating from the periodic orbits of the May--Leonard system. Other perturbations recently studied have been periodic in nature. In particular, it has been found that periodically forcing the May--Leonard system results in the existence of strange attractors (Rodrigues~\cite{rodrigues}). Additionally, periodic, quasiperiodic, and chaotic solutions have been shown to exist under different parameter conditions for small periodic perturbations to the asymmetric May--Leonard model (Afraimovich et al.~\cite{afraimovich-etal}) and time-periodic perturbations to a general 3-D competitive Lotka-Volterra model, of which the May--Leonard model is a subcase (Chen et al.~\cite{chen-etal}).

In this work, we add a linear perturbation to the symmetric May--Leonard model. This linear perturbation models mutations among the competing populations, whereby individuals in one class are able to mutate into another class. The first to examine these types of perturbations in a rock--paper--scissors model with replicator-mutator equations was Mobilia~\cite{mobilia-2010} about a decade ago, followed by Toupo and Strogatz~\cite{toupo-strogatz}, among others (Yang et al.~\cite{yang-etal-2017}, Park~\cite{park-2018}, Hu et al.~\cite{hu-etal-2019}, Mittal et al.~\cite{mittal-etal-2020}, Kabir and Tanimoto~\cite{kabir-tanimoto-2021}, Mukhopadhyay et al.~\cite{mukhopadhyay-etal-2021}). Both the replicator equations and the May--Leonard model have a similar structure, with the main difference being that in the former case the unknowns represent fractions of a fixed population,  while in the latter case the total population is not assumed to be a fixed number a priori. 

As was the case in the May--Leonard equations, depending on the parameter values chosen, the trajectories of the replicator equations exhibit three types of long-term behavior. Solutions can either approach the equal-population stable fixed point, a heteroclinic cycle or, in contrast to the May--Leonard model, one of the infinitely many neutrally stable cycles that fill the state space. The effect of adding global mutations to this system, where each species can mutate to any of the other two with the same rate, is the loss of the saddle fixed points that form the heteroclinic cycle, as well as  the emergence of a stable limit cycle from a supercritical Hopf bifurcation for certain parameter values (Mobilia~\cite{mobilia-2010}). In contrast, we find that adding  to the May--Leonard system  a linear perturbation modeling global mutations increases the number of physically-relevant steady states for certain parameter values, and consequently changes the ensuing dynamics. We summarize our findings, for small mutation rates, below:
\begin{itemize}
    \item As expected, the perturbation changes the nature of some steady states. We recover the trivial and equal-population equilibria; however, we also find a richer variety of fixed points that we view as perturbations of the single- and dual-population equilibria found in the May--Leonard model. 
    \item \com{
    As in the original equations,
    the linearly-perturbed system
    exhibits periodic solutions. However, while for the 
    May--Leonard model these trajectories are only present along a line in parameter space, in the case of the linearly-perturbed system, they can be observed for a wider set of parameter values.}

    \item In contrast to the May--Leonard system, the numerical results we present suggest that the linearly-perturbed May--Leonard model does not  possess heteroclinic cycles. \com{This means that the system no longer exhibits nonperiodic cyclic solutions of bounded amplitude but increasing cycle time.}
  \end{itemize}

Practical applications of the May--Leonard model~\cite{may-leonard} in the existing scientific literature are limited in number; \com{the model is nonetheless used, in one instance in the literature,} to calculate the cropping quotas for three competing herbivore species (Fay and Greeff~\cite{fay-greeff}).
Such limited applicability stems, in part, from the assumption that populations follow a cyclic dominance competition pattern, which can be restrictive. 
\com{For example, in the parameter regime that predicts nonperiodic oscillations in the May--Leonard model, no single species dominates, as the other two species will always rebound in size. However, after just a few cycles, the value of the two smaller populations will decrease below unity. As pointed out by May and Leonard in their paper, this behavior is idealized, and if we were to consider a real population, these species would instead go extinct (May and Leonard~\cite{may-leonard}).}

\com{On the other hand,}
it is in \com{the cyclic dominance competition pattern} aspect that the model's equations resemble the replicator equations used to model evolutionary games. Indeed, evolutionary games are widely used in theoretical biology to study interactions between species which follow a cyclic dominance pattern (Mobilia~\cite{mobilia-2010}, Cz\'{a}r\'{a}n et al.~\cite{czaran2002}, Kerr et al.~\cite{kerr2002}, Szolnoki et al.~\cite{szolnoki2014}, Hofbauer and Sigmund~\cite{hofbauer1998}). Although this form of competition seems to be rare in nature, there are a few examples where this behavior occurs. These include the mating strategies of side-blotched lizards (Sinervo and Lively~\cite{sinervo1996}, Zamudio and Sinervo~\cite{zamudio2000}), and the interactions between three different strains of E. coli (Kerr et al.~\cite{kerr2002}). In this context, mutation can be seen as the ability of a population to change its competing strategy. Previous work in this area by Toupo and Strogatz~\cite{toupo-strogatz} and Mobilia~\cite{mobilia-2010} has shown that global mutations result in the emergence of a limit cycle. It is perhaps then not surprising that the numerical and analytical results we present here lead to the same conclusion.

{\bf Outline:} This paper is organized as follows. We first review the fixed points of the May--Leonard model~\cite{may-leonard} and their corresponding stability in Section~\ref{sec:may-leonard}. We then introduce the linearly-perturbed May--Leonard model in Section~\ref{sec:may-leonard-perturbation}, and explore how this modification alters the dynamics for different parameter regimes in Section~\ref{sec:fixedpoints}.
In particular, we find that the system fails to maintain the heteroclinic connections that exist in the original model, justifying the lack of nonperiodic oscillations we observe in simulations. Finally, we summarize our findings in Section~\ref{sec:discussion}, where we further comment on the effects of adding the linear perturbation to the model.

\section{May--Leonard Model} \label{sec:may-leonard}

May and Leonard~\cite{may-leonard} extended the classic Lotka--Volterra equations for two competitors to a system of three competitors, $m_1(t)$, $m_2(t)$, and $m_3(t)$, described by equations of the general form
\begin{equation} \label{e:ML_general}
    \frac{d m_i(t)}{dt} = r_i m_i(t) \left( 1-\sum_{j=1}^3 \alpha_{ij} m_j(t) \right) , \qquad i=1,2,3.
\end{equation}
Under symmetry assumptions that the intrinsic growth rates are equal, $r:=r_1=r_2=r_3$, and that the competitors affect each other in a cyclic manner such that $\alpha:=\alpha_{12}=\alpha_{23}=\alpha_{31}$ and $\beta:=\alpha_{21}=\alpha_{13}=\alpha_{32}$, along with a rescaling of the populations $m_i$ and time $t$ such that $\alpha_{ii}=1$ and $r=1$, the May--Leonard model~\cite{may-leonard} becomes
\begin{subequations} \label{e:ML}
\begin{align}
    \frac{dm_1}{dt} &= m_1 \Big(1-m_1-\alpha m_2 - \beta m_3 \Big) , \\
    \frac{dm_2}{dt} &= m_2 \Big(1-\beta m_1- m_2 - \alpha m_3 \Big) , \\
    \frac{dm_3}{dt} &= m_3 \Big(1-\alpha m_1-\beta m_2 - m_3 \Big) .
\end{align}
\end{subequations}

Solutions to \eqref{e:ML} tend to one of the system's 8 fixed points, a limit cycle, or a nonperiodic oscillation of bounded amplitude but increasing cycle time.

\subsection{Fixed Points and Stability}

The May--Leonard model~\eqref{e:ML} possesses 5 distinct nonnegative fixed points,
\begin{subequations} \label{e:fixedpointsML}
\begin{gather}
     e_0 =(0,0,0),  \label{e:fixedpointsML_zero} \\[0.5em]
     e_1 = (1,0,0), \qquad e_2 = (0,1,0), \qquad e_3 = (0,0,1) ,  \label{e:fixedpointsML_singlepop} \\
     e_c = \left( \frac{1}{1+\alpha+\beta},\,\, \frac{1}{1+\alpha+\beta},\,\, \frac{1}{1+\alpha+\beta} \right),  \label{e:fixedpointsML_equalpop}
\end{gather}
\end{subequations}
known to exist for all values of $\alpha, \beta >0$, as well as 3 dual-population fixed points
\begin{subequations} \label{e:fixedpointsML_dualpop}
\begin{align}
    f_1 &= \left(0,\,\, \tfrac{1-\alpha}{1-\alpha \beta},\,\, \tfrac{1-\beta}{1-\alpha \beta} \right) , \\
    f_2 &= \left( \tfrac{1-\beta}{1-\alpha \beta},\,\,  0,\,\,  \tfrac{1-\alpha}{1-\alpha \beta} \right) , \\
    f_3 &= \left(\tfrac{1-\alpha}{1-\alpha \beta},\,\, \tfrac{1-\beta}{1-\alpha \beta},\,\, 0\right) ,
\end{align}
\end{subequations}
for which positivity, and thus their physical relevance, depends on the values of $\alpha$ and $\beta$. For example, for these fixed points to exist, we require $\alpha \beta \neq 1$.

The stability of these fixed points as a function of the two parameters $\alpha$ and $\beta$ is studied in depth in May and Leonard~\cite{may-leonard}. Their results are summarized in the stability diagram in Fig.~\ref{f:may-leonard-stability}, which we will also describe here.

\begin{figure}[h]
   \centering
   \includegraphics[width=0.9\columnwidth]{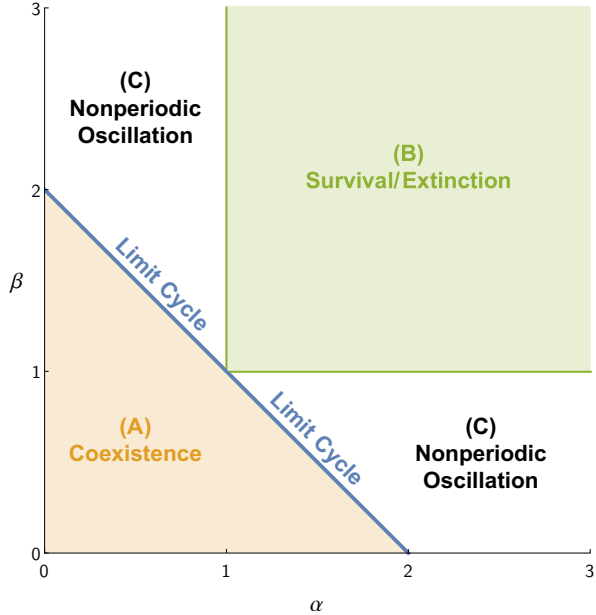} 
   \caption{Stability diagram of fixed points, limit cycles, and nonperiodic oscillations of the May--Leonard model~\eqref{e:ML}. In Region A, only the equal-population fixed point $e_c$~\com{\eqref{e:fixedpointsML_equalpop}} is stable. In Region B, which is bounded by the lines $\alpha=1$ and $\beta=1$, the single-population fixed points $e_1$, $e_2$, and $e_3$~\com{\eqref{e:fixedpointsML_singlepop}} are stable. In Region C, nonperiodic oscillations exist. Along the line $\alpha+\beta=2$, limit cycles exist.}
   \label{f:may-leonard-stability}
\end{figure}

The fixed point at the origin, $e_0$, is always unstable. In Region A, the only stable fixed point is the equal-population fixed point, $e_c$. 

In Region B, the situation is reversed and all single-population fixed points, $e_1$, $e_2$, and $e_3$, are stable, while the fixed point $e_c$ is now unstable. In this region, the long term dynamics of the system depend on the initial conditions, and thus the system approaches one of the fixed points, $e_1$, $e_2$, or $e_3$, according to its initial configuration.

In Region C, the system has nonperiodic cyclic solutions that lie on the hyperplane \mbox{$m_1+m_2+m_3=1$}. These solutions approach and then leave each of the single-population fixed points. The time the system spends near each $e_i$ increases as the system evolves, and this loitering behavior follows a logarithmic scale. On the border between Regions A and C, where the parameters satisfy $\alpha + \beta =2$, the system exhibits a limit cycle.

\section{May--Leonard Model With Linear Perturbations} \label{sec:may-leonard-perturbation}

We extend the May--Leonard model~\eqref{e:ML_general} to include linear perturbations that are of the same form as the ``global mutations'' in Toupo and Strogatz~\cite{toupo-strogatz}, where each population $m_i$ can mutate into the other two with rate $\mu$. The general form of this linearly-perturbed May--Leonard model, is
\begin{equation} \label{e:MLperturb_general}
	\frac{dm_i}{dt} = r_i m_i \left(1 - \sum_{j=1}^3 \alpha_{ij} m_j \right) + \mu \left( -2 m_i + \sum_{\substack{j = 1 \\ j\neq i}}^3 m_j \right),
\end{equation}
for $i=1,2,3$. Assuming, as in Section~\ref{sec:may-leonard}, equal intrinsic growth rates and that the competitors affect each other in a cyclic manner, along with the same rescaling of populations $m_i$ and time $t$, \eqref{e:MLperturb_general} becomes
\begin{subequations} \label{e:MLperturb}
\begin{align}
	\frac{dm_1}{dt} &= m_1 \Big(1 - m_1 - \alpha m_2 - \beta m_3 \Big) + \mu \Big( -2m_1 + m_2 + m_3 \Big) , \\
	\frac{dm_2}{dt} &= m_2 \Big(1 - \beta m_1 - m_2 - \alpha m_3 \Big) + \mu \Big( m_1 -2 m_2 + m_3 \Big) , \\
	\frac{dm_3}{dt} &= m_3 \Big(1 - \alpha m_1 - \beta m_2 - m_3 \Big) + \mu \Big( m_1 + m_2 - 2 m_3 \Big) . 
\end{align}
\end{subequations}

We assume that the competition parameters $\alpha,\beta>0$ and the mutation parameter $\mu>0$. In the rest of this paper, we study how the stability diagram of the May--Leonard model (Fig.~\ref{f:may-leonard-stability}) changes when the mutation parameter, $\mu$, in the linearly-perturbed model~\eqref{e:MLperturb} is nonzero.


\section{Stability Diagram} \label{sec:fixedpoints}

In this section, we use perturbation analysis and the continuation software package AUTO 07 \cite{auto07p} to investigate the effects of the mutation parameter, $\mu$, on the number and stability of nonnegative fixed points in system~\eqref{e:MLperturb}.
 Our results are summarized in Fig.~\ref{f:phasediagram-MLRPS}.

\begin{figure}[h] 
   \centering
   \includegraphics[width=0.9\columnwidth]{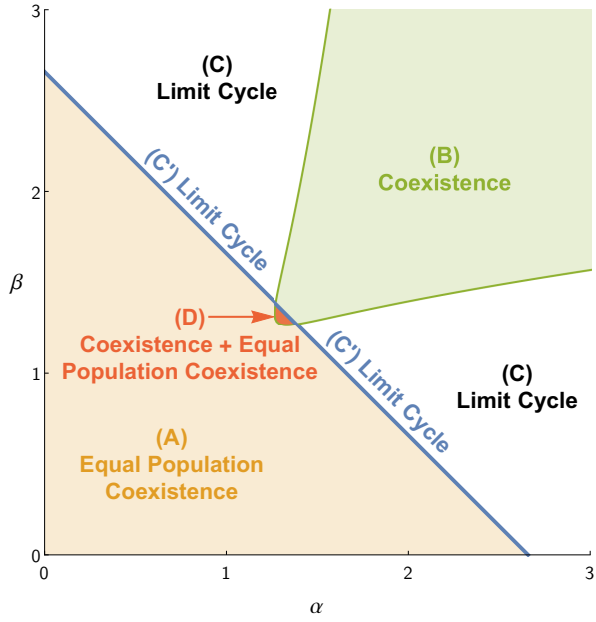}
   \caption{Stability regions of the linearly-perturbed May--Leonard model~\eqref{e:MLperturb} for $\mu =0.03$. 
   In Region A, only the equal-population fixed point $e_c$~\com{\eqref{e:fixedpointsML_equalpop}} is stable. 
   Crossing into Regions B and C, this fixed point loses stability in a Hopf bifurcation giving rise to a limit cycle that exists in Regions C and C$^\prime$.
 In Region B, six new fixed points emerge from a fold bifurcation, three of which are stable while the other three are unstable. 
    Region D differs from Region B only in the fact that the equal-population fixed point \com{$e_c$~\eqref{e:fixedpointsML_equalpop}} is stable in Region D.} \label{f:phasediagram-MLRPS}
\end{figure}


\subsection{Fixed Points}

We first focus on how the steady states of the linearly-perturbed May--Leonard model~\eqref{e:MLperturb} change as $\mu$ increases. A short computation shows that the fixed point at the origin, $e_0$~\eqref{e:fixedpointsML_zero}, 
and the equal-population steady state, $e_c$~\eqref{e:fixedpointsML_equalpop}, persist for all values of $\mu>0$. Though the expression for $e_c$~\eqref{e:fixedpointsML_equalpop} depends only on $\alpha$ and $\beta$, we find that  its stability depends in a nontrivial way on the parameter  $\mu$. Indeed, in Section \ref{ss:regionA}  we show that this fixed point  undergoes a Hopf bifurcation at a critical value, $\mu_c = \mu_c(\alpha, \beta)$.

On the other hand, when $\mu>0$, we no longer find single- and dual-population fixed points
  corresponding to \eqref{e:fixedpointsML_singlepop} and \eqref{e:fixedpointsML_dualpop} of the May--Leonard model~\eqref{e:ML}.
Instead, depending on the parameters $\alpha$, $\beta$, and $\mu$, the system \com{ might exhibit} six triple-population equilibria. Due to the symmetries of the system, these steady states can be split into two families, 
where members within a family can be mapped to each other by permuting their components.
We also find that these six fixed points disappear in a fold bifurcation as the value of $\mu$ is increased.
In particular, fixing the value of $\beta$, one can numerically compute two sets of curves in the $\mu$--$\alpha$ plane where this bifurcation occurs (Fig.~\ref{fig:folds_mu}). 

\begin{figure}[h] 
   \centering
   \subfigure[$\beta = 0.5$ \label{fig:beta05}]{\includegraphics[width=0.75\columnwidth]{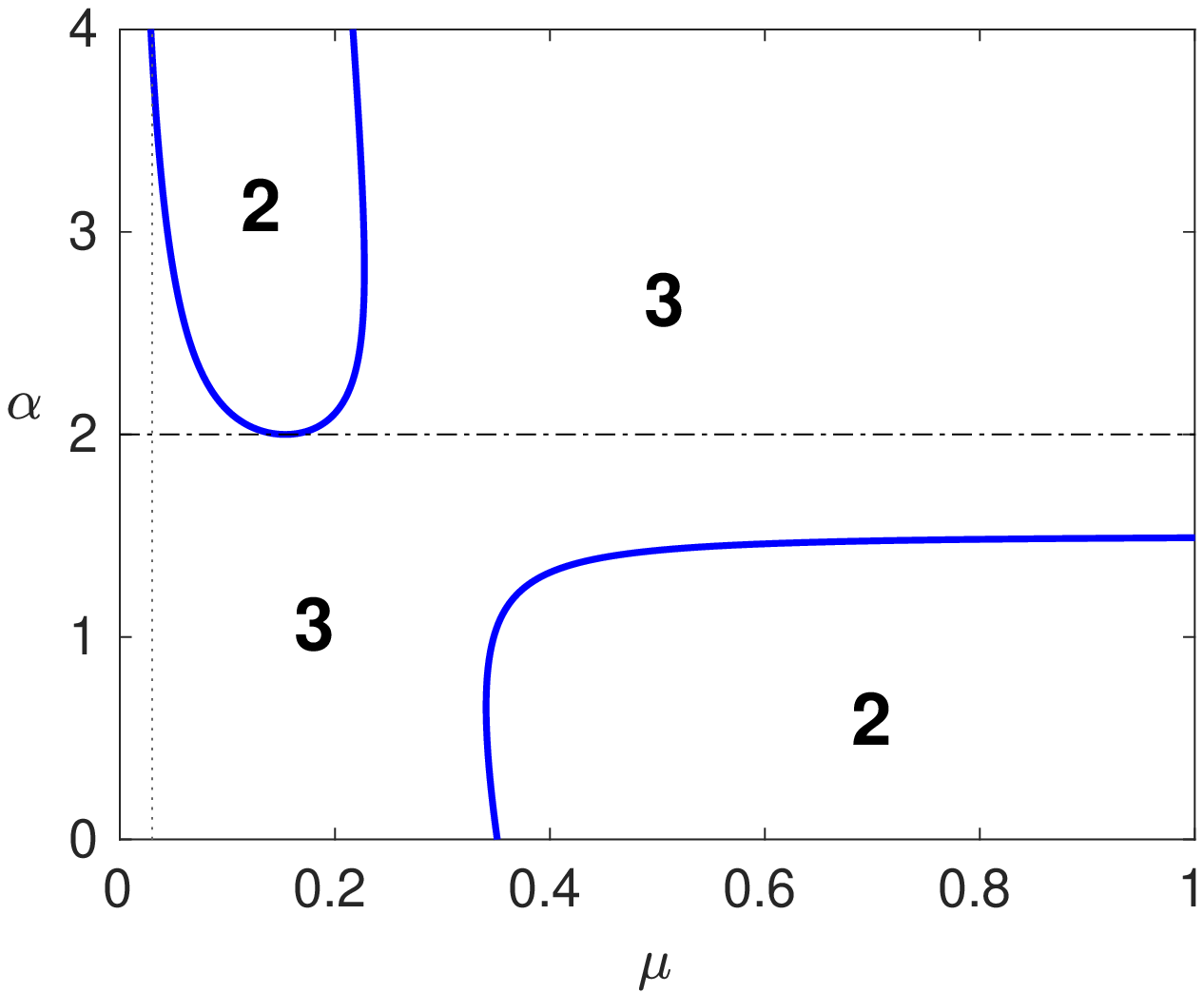}}
   \subfigure[$\beta =1.5$ \label{fig:beta15}]{\includegraphics[width=0.75\columnwidth]{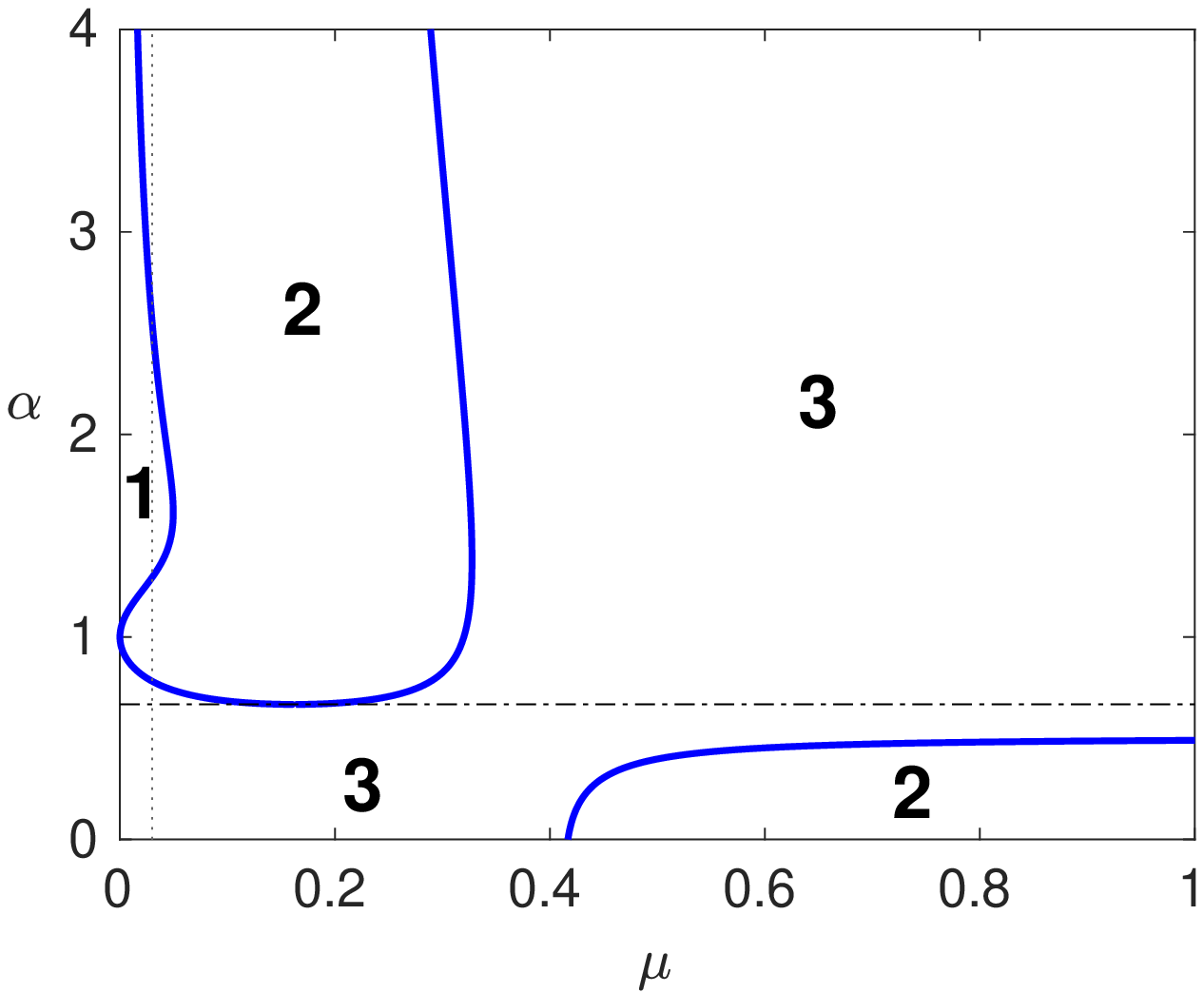} }
   \caption{Curves of fold bifurcations split the $\mu$--$\alpha$ plane into distinct regions. In Region~1, \com{the components of the} triple-population fixed points \com{are all positive}. In Region~3, the triple-population \com{fixed points} have at least one negative component. In Region~2,  no  triple-population fixed points exist. The horizontal dash-dotted line represents the condition $\alpha\beta =1$, where the triple-population fixed point has an unbounded component (see Appendix~\ref{sec:appendix}). The vertical dotted line represents $\mu=0.03$, which corresponds with the value used to produce the bifurcation curves in Fig.~\ref{fig:folds}.}
   \label{fig:folds_mu}
\end{figure}

We distinguish whether the system supports triple-population fixed points with only positive components or not. In particular, we find that for small values of $\beta$, the only positive steady state 
is the equal-population equilibrium $e_c$~\eqref{e:fixedpointsML_equalpop}, whereas for larger values of $\beta$, positive triple-population fixed points exist. This can be seen in Fig.~\ref{fig:beta05} and \ref{fig:beta15}, where we take $\beta = 0.5$ and $\beta=1.5$, respectively.  In both figures,  regions labeled with the number 2 correspond to parameter values where no triple-population equilibria exist. Similarly, regions labeled as 3 correspond to values of $\alpha$ and $\mu$ where these fixed points appear but have at least one negative component. In the regions labeled as 3, we also observe that for certain parameter values, which we plot as a dotted line, these fixed points have an unbounded component. Finally, the region labeled as 1, which is only present in Fig.~\ref{fig:beta15}, consists of those parameter values  where triple-population fixed points exist and have only positive components. To understand the emergence of the  triple-population equilibria in this region, we fix the value of $\mu$ to be a very small number and track these steady states as the other two parameters are varied.

As we look for equilibrium points in the $\alpha$--$\beta$ plane (for small $\mu$), we discover a series of fold bifurcations that define regions of existence of triple-population equilibria. Specifically, when  $\alpha$ and $\beta$ are small, we find that all six fixed points have at least one negative component and are therefore not physically relevant. As the values of $\alpha$ and $\beta$ increase, these two families of equilibria collide and disappear at a curve of fold bifurcations, which is depicted in Fig.~\ref{fig:folds} by the left-most dashed curve (labeled as 1). The only fixed points that exist here are the origin and the equal-population equilibria. As $\alpha$ and $\beta$ increase further, a new set of six equilibria re-emerge, this time with positive components. The second fold bifurcation where this occurs is depicted in Fig.~\ref{fig:folds} by the right-most dashed curve (labeled as 2). As can be seen in Fig.~\ref{fig:folds}, this disappearance and re-emergence can happen more than once for specific $\alpha$ or $\beta$ values. For example, fix $\alpha$ at $\alpha=1.5$. For small $\beta$ values, there are no physically relevant equilibrium points, other than the origin and the equal-population ones. As $\beta$ increases to around 1.29, a set of six triple-population points emerge. When $\beta$ reaches 2.55, these points coalesce and disappear.

\begin{figure}[h] 
   \centering
   \includegraphics[width=0.9\columnwidth]{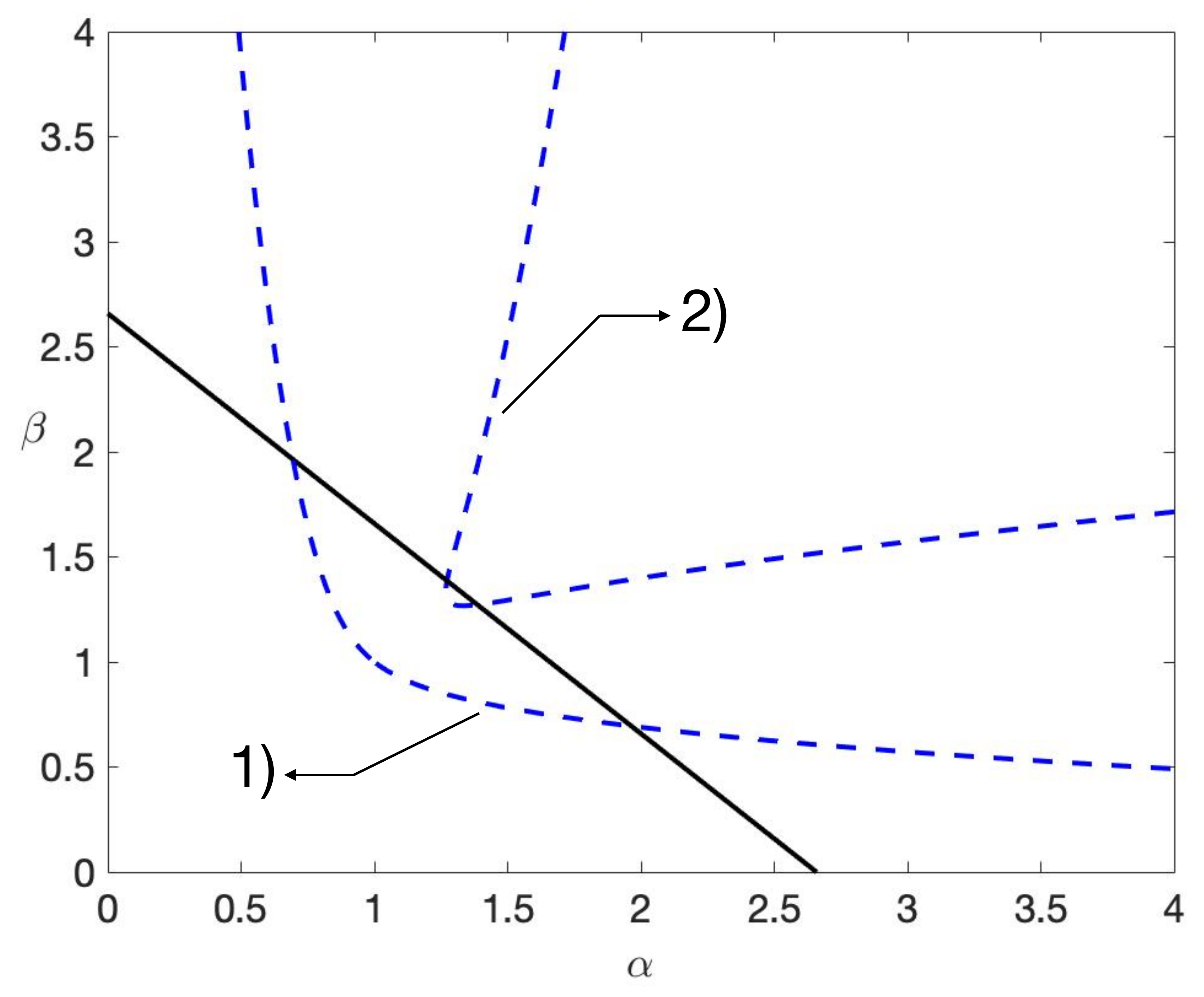} 
   \caption{Bifurcation curves for the linearly-perturbed May--Leonard model~\eqref{e:MLperturb} 
   with \mbox{$\mu =0.03$}. Curves were computed using AUTO 07~\cite{auto07p}. The solid line represents the locus of Hopf bifurcations for
   the fixed point $e_c$~\eqref{e:fixedpointsML_equalpop}. The dashed curves \com{labeled 1) and 2)} represent the locus of fold bifurcations.
    In between \com{these curves}, the only equilibria that exist are $e_0$~\com{\eqref{e:fixedpointsML_zero}} and $e_c$~\com{\eqref{e:fixedpointsML_equalpop}}.
 To the left of curve 1), we find that six additional equilibria emerge, all of which have at least one negative component, while
  to the right of curve 2), we find a different set of six additional positive equilibria. }
   \label{fig:folds}
\end{figure}

 Going back to Fig.~\ref{fig:folds_mu}, we want to relate Regions 1, 2, and 3 in this diagram with the sections in Fig.~\ref{fig:folds} which are separated by dashed curves. We observe that along the vertical dotted line representing $\mu=0.03$ in Fig.~\ref{fig:beta15}, Region 3 corresponds with the section to the left of the dashed curve labeled 1) in Fig.~\ref{fig:folds}, where the triple-population fixed points have at least one negative component. As we continue up the line $\mu=0.03$ into Region 2 in Fig.~\ref{fig:beta15}, we move into the section in between the dashed curves labeled 1) and 2) in Fig.~\ref{fig:folds}, where only the equal-population, $e_c$~\eqref{e:fixedpointsML_equalpop}, and zero-population fixed point, $e_0$~\eqref{e:fixedpointsML_zero}, exist. Going back to Fig.~\ref{fig:beta15} 
and crossing into Region 1,  we move into the section to the right of the dashed curve labeled 2) in Fig.~\ref{fig:folds}, 
where the triple-population fixed points all have positive components. A similar description can be made for the case when $\beta =0.5$ (Fig.~\ref{fig:beta05}).

 Fig.~\ref{fig:folds} also shows how the locus of Hopf and fold bifurcations divide
 the $\alpha$--$\beta$ plane. The right-most curve of fold bifurcations (labeled as 2) together with the Hopf line
  create the four stability regions depicted in Fig.~\ref{f:phasediagram-MLRPS}. 
  We find that in Regions A and C, the only nonnegative equilibria are $e_0$~\eqref{e:fixedpointsML_zero} and $e_c$~\eqref{e:fixedpointsML_equalpop},
 while in Regions B and D, we have an additional six positive steady states.
  In the following subsections, we describe the dynamics of the system in each of these four regions, as well as along the Hopf bifurcation line.

\subsection{Region A}\label{ss:regionA}

In this subsection, we investigate the stability of the equal-population fixed point, $e_c$~\eqref{e:fixedpointsML_equalpop}, in Region A. We find that for parameter values of
$\mu \geq 1/6$, this fixed point is stable for all positive values of $\alpha$ and $\beta$, while for values of $\mu< 1/6$, the equilibrium point loses its stability through a Hopf bifurcation.
In the latter case, we also obtain an expression, $\beta_c = \beta_c(\alpha, \mu)$, for the location in the $\alpha$--$\beta$ plane  where this transition occurs. Given a fixed value $\mu^*$, the line $\beta_c = \beta_c(\alpha, \mu^*)$ then 
represents the stability boundary for the equal-population fixed point. In fact, Region A is defined to be the region below this line, for which a precise expression is obtained later in equation \eqref{e:HopfBifurcationLine}. The following \com{theorem} summarizes what we know about Region A.
\begin{theorem}
For $\alpha,\:\beta>0$, the fixed point, $e_c$~\eqref{e:fixedpointsML_equalpop}, is always stable for $\mu \geq 1/6$. But for $\mu < 1/6$, it is only stable when $\alpha+\beta < (6\mu+2)/(1-6\mu).$
\end{theorem}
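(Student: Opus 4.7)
The plan is to linearize the system \eqref{e:MLperturb} around the equal-population fixed point $e_c$ and analyze the spectrum of the Jacobian. Writing $c = 1/(1+\alpha+\beta)$, a direct computation of the partial derivatives at $e_c$ gives a $3 \times 3$ Jacobian whose diagonal entries all equal $-c - 2\mu$, and whose off-diagonal entries cyclically permute between $-\alpha c + \mu$ and $-\beta c + \mu$. The key structural observation I would emphasize is that this matrix is \emph{circulant}, a consequence of the cyclic symmetry built into \eqref{e:MLperturb}. This reduces the eigenvalue problem to a closed-form evaluation rather than to solving a general cubic.

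Next, I would invoke the standard formula for eigenvalues of a $3 \times 3$ circulant matrix with first row $(a,b,d)$, namely $\lambda_k = a + b\omega^k + d\omega^{2k}$ for $k=0,1,2$ with $\omega = e^{2\pi i/3}$. Substituting $a = -c-2\mu$, $b = -\alpha c + \mu$, $d = -\beta c + \mu$ and using $c(1+\alpha+\beta) = 1$, the mode $k=0$ collapses nicely to $\lambda_0 = -1$, which is harmless. The remaining two modes are complex conjugates, and using $\mathrm{Re}(\omega) = \mathrm{Re}(\omega^2) = -1/2$, their common real part simplifies to
\begin{equation*}
\mathrm{Re}(\lambda_{1,2}) = \frac{\alpha+\beta-2}{2(1+\alpha+\beta)} - 3\mu.
\end{equation*}
Asymptotic stability of $e_c$ is then equivalent to the single scalar inequality $\mathrm{Re}(\lambda_{1,2}) < 0$, i.e. $(s-2)/(1+s) < 6\mu$ with $s := \alpha+\beta$.

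The final step is to split on whether $6\mu \geq 1$ or $6\mu < 1$. Since $(s-2)/(1+s) = 1 - 3/(1+s) < 1$ for every $s>0$, the inequality holds unconditionally whenever $\mu \geq 1/6$, yielding the first half of the theorem. When $\mu < 1/6$, the denominator $1 - 6\mu$ is positive, and one can clear it from $s - 2 < 6\mu(1+s)$ to obtain the stated bound $\alpha + \beta < (6\mu+2)/(1-6\mu)$. I would also remark that equality in this bound corresponds to $\mathrm{Re}(\lambda_{1,2}) = 0$ with nonzero imaginary part, which is precisely the Hopf locus announced in the preceding discussion and will motivate the expression for $\beta_c(\alpha,\mu)$ in \eqref{e:HopfBifurcationLine}.

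The main obstacle is conceptual rather than computational: one has to notice the circulant structure in order to avoid wrestling with a general cubic characteristic polynomial. After that, the proof is essentially a careful bookkeeping exercise, with the only mild subtlety being that the sign of $1 - 6\mu$ controls whether the final inequality can be cleared without flipping direction, which is exactly why the two cases $\mu \geq 1/6$ and $\mu < 1/6$ appear in the statement.
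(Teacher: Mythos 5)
Your proposal is correct and follows essentially the same route as the paper's proof: exploit the circulant structure of the Jacobian at $e_c$, read off $\lambda_1=-1$ and the conjugate pair with real part $\tfrac{\alpha+\beta-2}{2(1+\alpha+\beta)}-3\mu$, and split on the sign of $1-6\mu$ when clearing the denominator. The only cosmetic difference is that you phrase the $\mu\geq 1/6$ case via the bound $(s-2)/(1+s)<1$ while the paper rearranges to $(6\mu-1)(\alpha+\beta)>-6\mu-2$; these are equivalent.
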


\begin{proof}
We start with the Jacobian of system \eqref{e:MLperturb} evaluated at the equilibrium point $e_c$~\eqref{e:fixedpointsML_equalpop},
\begin{equation}
\label{e:equaltriplefp_jacobian}    
J  = \frac{-1}{1+\alpha+\beta}\begin{bmatrix} 
1 & \alpha & \beta\\
\beta & 1 & \alpha\\
\alpha& \beta &1 
\end{bmatrix} + \mu \begin{bmatrix*}[r]
-2 & 1 & 1\\
1 & -2 & 1\\
1 & 1 & -2
\end{bmatrix*},
\end{equation}
which is a circulant matrix. A circulant matrix is a square matrix whose rows are composed of cyclically shifted versions of the same elements. The eigenvalues (and eigenvectors) of a circulant matrix can be elegantly expressed in terms of these elements and the root of unity (Davis~\cite{circulant}). \com{Particularly,} the eigenvalues of $J$ are given by the expression, 
\begin{multline*}
    \lambda_j = \left(\frac{-1}{1+\alpha+\beta} -2\mu\right) + \left(\frac{-\alpha}{1+\alpha+\beta} +\mu\right)\eta^{j-1} \\
    +\left(\frac{-\beta}{1+\alpha+\beta} +\mu\right)\eta^{2j-2},
\end{multline*}
for $j = 1,2,3$, where
$    \eta = \text{exp}\left(\frac{2\pi i}{3}\right)= -\frac{1}{2} +  \frac{\sqrt{3}}{2}i$.
The eigenvalues can be simplified to 
\begin{subequations}
\begin{align}
    \lambda_1 = &  -1 ,\\ 
    \label{e:equaltriplefp_lambda23}
        \lambda_{2,3} =  & \left[\frac{\alpha+\beta-2}{2(1+\alpha+\beta)}-3\mu \right]\pm \left[\frac{\sqrt{3}(\beta - \alpha)}{2(1+\alpha+\beta)}\right]i. 
\end{align}
\end{subequations}

Thus, a necessary and sufficient condition for stability of the fixed point $e_c$, with the assumption that $1+\alpha+\beta>0$, is
\begin{equation*}
    (\alpha+\beta-2)-6\mu(1+\alpha+\beta) <0,
\end{equation*}
which can be rewritten as 
\begin{equation} \label{e:equaltriplefp_stabilitycond}
    (6\mu-1)(\alpha+\beta)> -6\mu-2.
\end{equation}
 
\begin{figure}[b]
\centering
	\includegraphics[width=0.9\columnwidth]{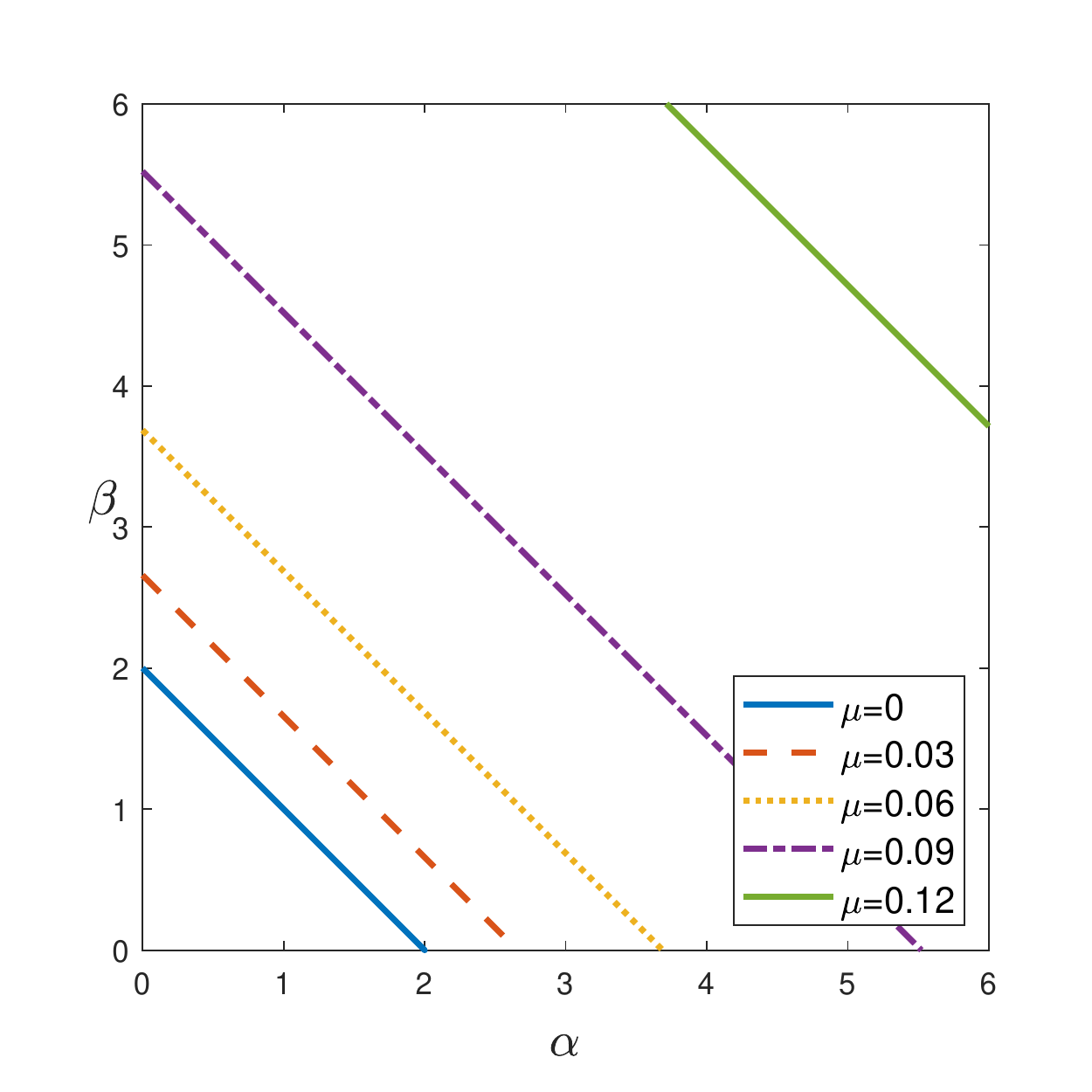}
	\caption{Stability region for the equal-population fixed point $e_c$~\eqref{e:fixedpointsML_equalpop} for various $\mu$ values, \com{with $\mu<1/6$}. Corresponding to the condition \eqref{e:equaltriplefp_stabilitycond_v2}, for a given $\mu$, the equal-population fixed point \com{$e_c$~\eqref{e:fixedpointsML_equalpop}} is stable below the line \eqref{e:HopfBifurcationLine} and is unstable above the line.}
	\label{f:stability_3species}
\end{figure}

If $\mu \geq \frac{1}{6}$, condition \eqref{e:equaltriplefp_stabilitycond} is always satisfied since $\alpha+\beta$ is assumed to be positive, and $e_c$ is always stable for this case. If $\mu<\frac{1}{6}$, the condition becomes
\begin{equation} \label{e:equaltriplefp_stabilitycond_v2}
    \alpha+\beta< \dfrac{6\mu+2}{1-6\mu}.
\end{equation}
Fig.~\ref{f:stability_3species} illustrates the stability regions corresponding to condition \eqref{e:equaltriplefp_stabilitycond_v2} for different values of $\mu$. For values of $\alpha$ and $\beta$ below the line
\begin{equation} \label{e:HopfBifurcationLine}
    \beta_c(\alpha, \mu)= - \alpha + \frac{6 \mu+ 2}{1- 6 \mu} ,
\end{equation}
for a given $\mu$ less than 1/6, the fixed point $e_c$ is stable, and above the line \eqref{e:HopfBifurcationLine}, $e_c$ is unstable. Therefore, the line \eqref{e:HopfBifurcationLine} describes the critical value of the parameter $\beta$ where the equal-population equilibrium $e_c$ undergoes a Hopf bifurcation.
\end{proof}

\subsection{Region C}\label{ss:regionC}
Continuing with the analysis of the equal-population fixed point $e_c$~\eqref{e:fixedpointsML_equalpop}, in this subsection we show that this equilibrium undergoes a supercritical Hopf bifurcation for the critical values in \eqref{e:HopfBifurcationLine}.  Consequently, the linearly-perturbed May--Leonard system~\eqref{e:MLperturb} admits a limit cycle solution in Region C. This region is defined to be the two portions of the $\alpha$--$\beta$ plane which are above the line \eqref{e:HopfBifurcationLine} and outside the bifurcation curve labeled 2) in Fig.~\ref{fig:folds}. To prove that the Hopf bifurcation is supercritical, we calculate the first Lyapunov coefficient following the analysis in Kuznetsov~\cite[Chapter 5.4]{kuznetsov1998}
and show that it is negative for all positive values of $\alpha$ and $\beta$.

To set up the notation and simplify the analysis, we first recall how to calculate 
the Lyapunov coefficient for a general $n$-dimensional system of the form
\begin{equation}
    \frac{dx}{dt} = A x + F(x), \quad x \in \R^n.
\end{equation}
In what follows we assume that $A$ is an $n\times n$ matrix that 
has a pair of complex eigenvalues $\lambda = \pm\; i \omega$, where $\omega>0$,
and $F(x) = \rmO(\|x\|^2)$ represents all the nonlinear terms.

The Taylor expansion of $F(x)$ about the origin is given by
\begin{equation} \label{e:Ftaylorexpand}
    F(x) = \frac{1}{2}B(x,x) + \frac{1}{6}C(x,x,x) + \rmO(\|x\|^4) ,
\end{equation}
where
\begin{subequations}
\begin{align}
B_i(x,y) &= \sum_{j,k=1}^n  \left. \frac{\partial^2 F_i(\xi)}{\partial \xi_j \partial \xi_k} \right\rvert_{\xi=0} x_j y_k,\\[4ex]
C_i(x,y,z) &= \sum_{j,k, \ell =1}^n \left. \frac{\partial^3 F_i(\xi)}{\partial \xi_j \partial \xi_k\partial \xi_\ell} \right\rvert_{\xi=0} x_j y_k z_\ell,
\end{align}
\end{subequations}
for $i=1, \cdots,n$. The first Lyapunov coefficient can then be computed as
\begin{multline} \label{eq:firstLyapunov}
    \ell_1(0) = \frac{1}{2 \omega} \Re\Big[ \big\langle p, C(q,q,\bar{q}) \big\rangle 
    -2 \big\langle p, B(q, A^{-1}B(q,\bar{q}) \;) \big\rangle \\
    + \big\langle p, B( \bar{q}, (2 i \omega I_n -A)^{-1} B(q,q) \;) \big\rangle \Big],
\end{multline}
 where the complex vectors $p$ and $q$ satisfy
\begin{equation} \label{e:pqvectorconditions}
    A q = i \omega q, \quad A^Tp = -i \omega p, \quad \langle p, q\rangle = \sum_{i=1}^n \bar{p}_i q_i=1.
\end{equation}

We now proceed to find $\ell_1(0)$ for the linearly-perturbed May--Leonard model~\eqref{e:MLperturb}, where $n=3$, in order to show the following result.

\begin{theorem}
For $\alpha,\:\beta>0$, $\ell_1(0)<0$ in Region C. Therefore, the Hopf bifurcation is supercritical.
\end{theorem}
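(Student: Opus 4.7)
The plan is to apply the first-Lyapunov-coefficient formula \eqref{eq:firstLyapunov} directly, exploiting the circulant structure of the Jacobian at every step so that the $3\times 3$ linear algebra reduces to scalar identities on the Fourier basis $\{(1,1,1)^T,\,(1,\eta,\eta^2)^T,\,(1,\eta^2,\eta)^T\}$ with $\eta = e^{2\pi i/3}$. A crucial preliminary observation is that the nonlinearity in \eqref{e:MLperturb} is purely quadratic---the mutation perturbation is linear and the competition terms are bilinear---so the trilinear form $C$ in \eqref{e:Ftaylorexpand} vanishes identically, and only the two $B$-terms in \eqref{eq:firstLyapunov} need to be evaluated.

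First I would translate $e_c$ to the origin, identify $A$ with the Jacobian \eqref{e:equaltriplefp_jacobian} evaluated on the Hopf curve \eqref{e:HopfBifurcationLine}, and read off the symmetric bilinear form $B_i(x,y) = -2 x_i y_i - \alpha(x_i y_{i+1}+x_{i+1}y_i) - \beta(x_i y_{i+2}+x_{i+2}y_i)$, with indices taken modulo $3$. Because both $A$ and $A^T$ are circulant, they share eigenspaces on the Fourier basis above. Taking $q = (1,\eta,\eta^2)^T$, one checks that $Aq = i\omega q$ on the Hopf line; the small but important subtlety is that the left eigenvector $p$ satisfying $A^T p = -i\omega p$ is itself proportional to $q$ rather than to $\bar q$ (the conjugate $\bar q$ is an eigenvector of $A$, not of $A^T$, with eigenvalue $-i\omega$). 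The normalization $\langle p,q\rangle = 1$ then fixes $p = q/3$.

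The payoff of the Fourier choice is that $B$ acts like a convolution on this basis: a short computation gives $B(q,q) = -2K\,\bar q$ with $K = 1+\alpha\eta+\beta\eta^2$; $B(q,\bar q) = (\alpha+\beta-2)\,\mathbf{1}$ lies in the $-1$ eigenspace spanned by $\mathbf{1}=(1,1,1)^T$; and $B(q,\mathbf{1}) = -M\,q$ with $M = 2-\alpha\eta^2-\beta\eta$. Consequently every vector that appears inside an inverse in \eqref{eq:firstLyapunov} is already an eigenvector of $A$, so inverting $A$ or $2i\omega I-A$ reduces to division by the corresponding scalar eigenvalue. Assembling the pieces, the first inner product simplifies to $-(2-\alpha-\beta)M$ and the second to $4|K|^2/(3 i\omega)$; since the latter is purely imaginary, it drops out upon taking the real part.

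What then remains is the identity
\[
\ell_1(0) \;=\; \frac{(2-\alpha-\beta)(4+\alpha+\beta)}{2\omega},
\]
which follows after using $\Re(M) = 2+(\alpha+\beta)/2$. Strict negativity throughout Region~C is then immediate from the Hopf constraint \eqref{e:HopfBifurcationLine}: for $0<\mu<1/6$ one has $\alpha+\beta = (6\mu+2)/(1-6\mu)>2$, so the first factor in the numerator is negative while $4+\alpha+\beta$ is positive, and $\omega>0$ after a harmless relabeling ensuring $\beta>\alpha$. The main obstacle I anticipate is the careful bookkeeping of which eigenvectors of $A$ versus $A^T$ pair with $\pm i\omega$ on the circulant basis---a misidentification here would place $p$ on the wrong complex line and make $\langle p,q\rangle$ vanish, collapsing the normalization. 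Once that is pinned down, the rest of the argument is essentially recognition of the cyclic convolution pattern followed by a single sign check at the end.
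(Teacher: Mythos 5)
Your proposal is correct and follows essentially the same route as the paper: Kuznetsov's first-Lyapunov-coefficient formula with $C\equiv 0$, the circulant eigenvector $q\propto(1,\eta,\eta^2)^T$ paired with $p\propto q$ (not $\bar q$), and the observation that the $(2i\omega I_n-A)^{-1}$ term is purely imaginary and drops out --- the only difference being that you carry out the bilinear-form algebra by hand via the convolution identities $B(q,q)=-2K\bar q$, $B(q,\bar q)\propto\mathbf{1}$, $B(q,\mathbf{1})=-Mq$, where the paper delegates this step to Mathematica. Your value $\ell_1(0)=(2-\alpha-\beta)(4+\alpha+\beta)/(2\omega)$ is three times the paper's $-(\alpha+\beta-2)(\alpha+\beta+4)/(6\omega)$ only because your $q$ has $\|q\|^2=3$ rather than $1$ (the coefficient scales by $\|q\|^2$ under the normalizations permitted by $\langle p,q\rangle=1$), which affects neither the sign nor the conclusion.
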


\begin{proof} Since this system has only quadratic nonlinearities, the formula for the first Lyapunov coefficient stated above~\eqref{eq:firstLyapunov} reduces to
\begin{multline}\label{e:LyapunovCoeff}
    \ell_1(0) = \frac{1}{2 \omega} \Re\Big[ -2 \big\langle p, B(q, A^{-1}B(q,\bar{q}) \;) \big\rangle \\
    + \big\langle p, B( \bar{q}, (2 i \omega I_n -A)^{-1} B(q,q) \;) \big\rangle \Big].
\end{multline}
We first need to change coordinates so that the equal-population equilibrium occurs at the origin. However, because system \eqref{e:MLperturb} has only quadratic nonlinearities,
the coefficients in the Taylor expansion for $F(x)$~\eqref{e:Ftaylorexpand} remain unchanged.
In other words, the vector valued function $B: \R^n \times \R^n \longrightarrow \R^n$ is always the same, 
regardless of whether we compute the Taylor expansion at the origin or at some other point. In particular, it takes the form
\begin{equation}
    B(x,y) = -\begin{bmatrix}
(2 x_1 + \alpha x_2 + \beta x_3) y_1 + \alpha x_1 y_2 + \beta x_1 y_3 \\[1ex]
 \beta x_2 y_1 + (\beta x_1 + 2 x_2 + \alpha x_3) y_2 + \alpha x_2 y_3\\[1ex]
 \alpha x_3 y_1 + \beta x_3 y_2 +(\alpha x_1 +\beta x_2 +2x_3)y_3
\end{bmatrix}.
\end{equation}

The Jacobian of the system evaluated at the equilibrium $e_c$~\eqref{e:fixedpointsML_equalpop} is given by $J$ in \eqref{e:equaltriplefp_jacobian}, which admits the pair of complex eigenvalues (\ref{e:equaltriplefp_lambda23}). Assuming these eigenvalues are purely imaginary, we then get the critical value of $\mu_c$ where the Hopf Bifurcation occurs,
\begin{equation} \label{e:muCHopf}
    \mu_c = \frac{\alpha+\beta -2}{6(1+\alpha+\beta)},
\end{equation}
with the imaginary eigenvalues
\begin{equation} \label{e:Hopf_imaginaryeig}
    \lambda_{2,3} =  \pm \left[\frac{\sqrt{3}(\beta - \alpha)}{2(1+\alpha+\beta)}\right]i \equiv \pm i\omega.
\end{equation}

We first consider the case where $\omega>0$, that is $\beta > \alpha$. Replacing $\mu_c$ by \eqref{e:muCHopf} in $J$ and rewriting its entries in terms of $\omega$~\eqref{e:Hopf_imaginaryeig}, we get the matrix $A$, 
\begin{equation}
    A = -\frac{1}{3} \begin{bmatrix}
 1 & 1 - \sqrt{3} \omega & 1 + \sqrt{3} \omega\\[1ex]
 1 + \sqrt{3} \omega& 1 & 1 - \sqrt{3} \omega \\[1ex]
  1 - \sqrt{3} \omega &1 + \sqrt{3} \omega& 1 
 \end{bmatrix}.
\end{equation}
Since $A$ is a circulant matrix (Davis~\cite{circulant}), the normalized eigenvector corresponding to $i\omega$ is 
\begin{equation}
    q= \frac{1}{\sqrt{3}}
 \begin{bmatrix}
1 \\
\eta\\
\eta^2
\end{bmatrix}, \quad \text{ where }\eta = -\frac{1}{2} +  \frac{\sqrt{3}}{2}i.
\end{equation}

This is also the normalized eigenvector of $A^T$, also a circulant matrix, corresponding to $-i\omega$. Hence, we take 
\begin{equation}
    p = q =  \frac{1}{\sqrt{3}}
 \begin{bmatrix}
1 \\
-\frac{1}{2}+\frac{\sqrt{3}}{2}i\\
-\frac{1}{2}-\frac{\sqrt{3}}{2}i
\end{bmatrix},
\end{equation}
which satisfies \eqref{e:pqvectorconditions}.
Continuing the computations using Mathematica, the real part of the first term of the first Lyapunov coefficient, $-2 \langle p, B(q,A^{-1}B(q,\bar{q})\rangle$, results in $-1/3\, (\alpha + \beta-2) (4 + \alpha + \beta)$. The second term, $$\left\langle p, B\left( \bar{q}, (2 i \omega I_n -A)^{-1} B(q,q) \right) \right\rangle,$$ turns out to be a purely imaginary number. Replacing these expressions in \eqref{e:LyapunovCoeff} leads to 
\begin{multline}
    \ell_1(0) =  - \frac{(\alpha+\beta-2)(\alpha+\beta+4)}{6 \omega}, \\
    \text{ where } \omega = \frac{\sqrt{3}(\beta-\alpha)}{2(\alpha+\beta+1)}.
\end{multline}
In Region C, we know that $\alpha+\beta >2,$ so $\ell_1(0)<0$. Therefore, the Hopf bifurcation is supercritical.

In the case where $\omega<0$, that is $\beta < \alpha$, we get $\lambda_{2,3}= \mp i \lvert\omega\rvert$ and the eigenvectors can be chosen to be the conjugates of those in the previous case. We then get a similar expression for $\ell_1(0)$,
\begin{equation}
    \ell_1(0) =  - \frac{(\alpha+\beta-2)(\alpha+\beta+4)}{6 \lvert\omega\rvert} ,
\end{equation}
which again is negative in Region C.
\end{proof}

Notice that when $\alpha = \beta$ the value of $\omega =0$. In this case, the Jacobian evaluated at the equal-population equilibrium, $e_c$~\eqref{e:fixedpointsML_equalpop}, is given by

\begin{equation}
    A = - \frac{1}{3} 
\begin{bmatrix}
1 & 1 & 1\\
1 & 1 & 1\\
1 & 1 & 1
\end{bmatrix} ,
\end{equation}
and has two simple zero eigenvalues.

\begin{figure}[t] 
    \centering
    \subfigure[$\alpha = 1.3$, $\beta = 1.3585$ \label{fig:RegCP_CuspSim_Inside}]{\includegraphics[width=0.7\columnwidth]{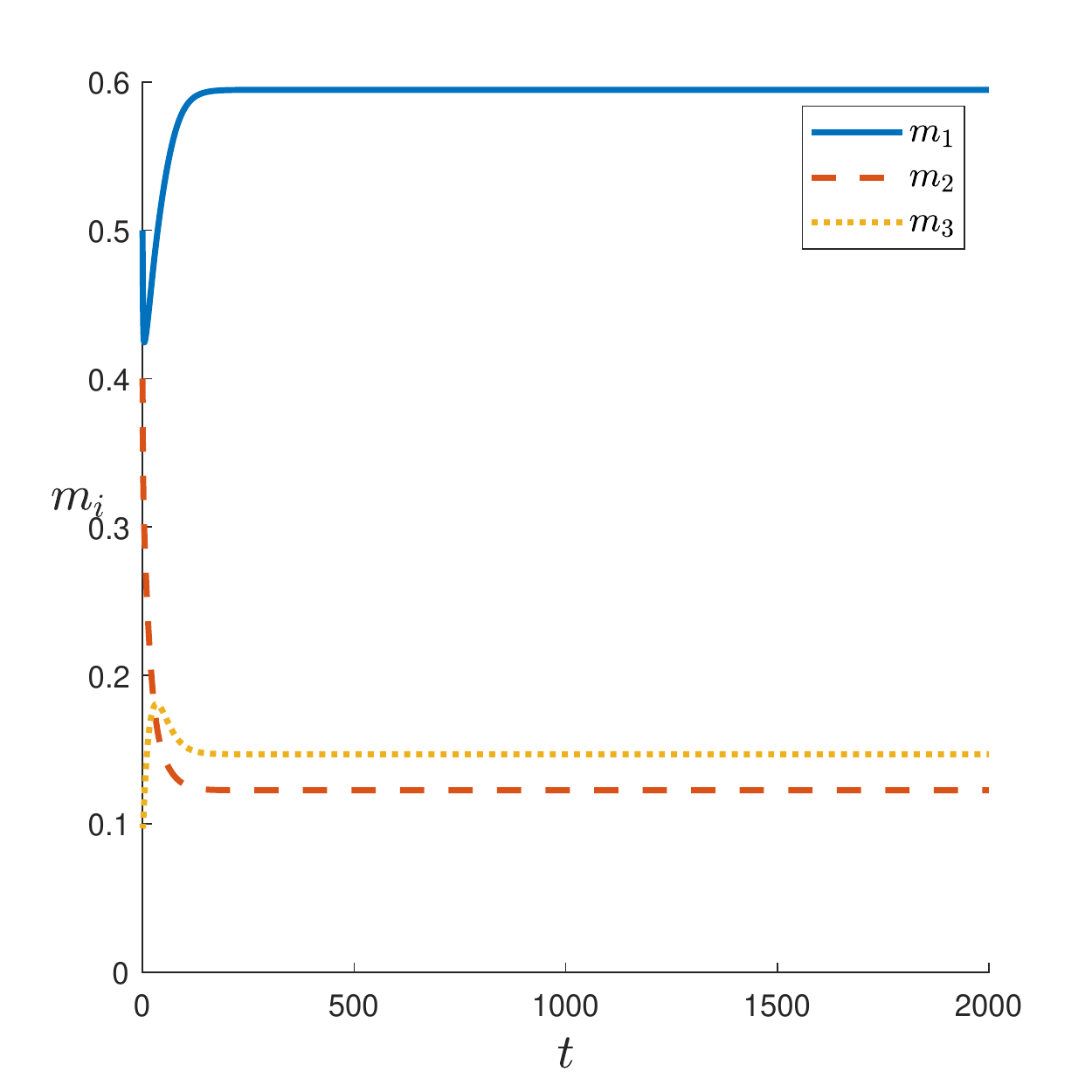}}
    \hspace{0.2in}
    \subfigure[$\alpha = 1.2$, $\beta = 1.4585$ \label{fig:RegCP_CuspSim_Outside}]{\includegraphics[width=0.7\columnwidth]{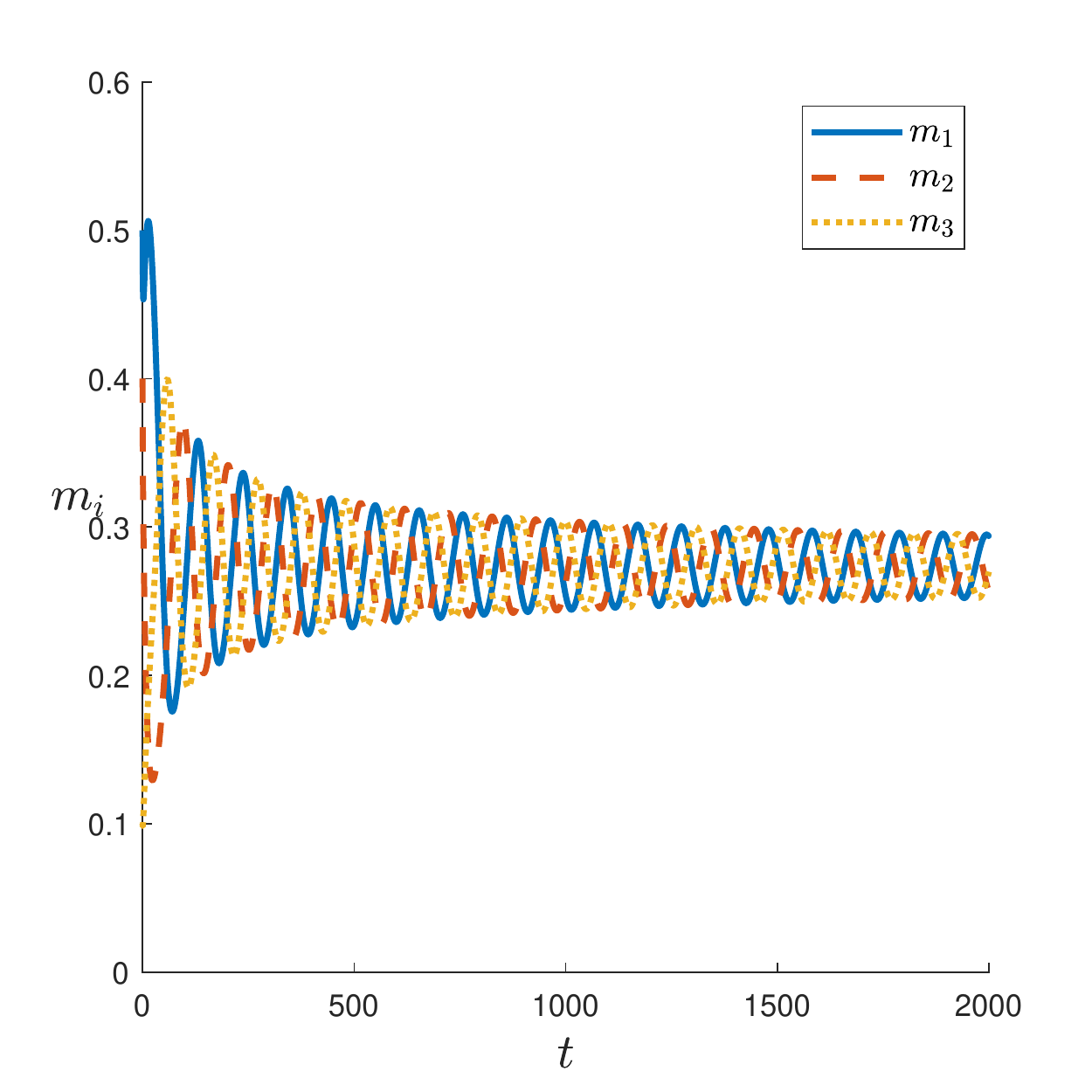}}
 \caption{Solutions corresponding to different $\alpha$ and $\beta$ values in Region C$^{\prime}$ (which corresponds to the line \eqref{e:HopfBifurcationLine}) with $\mu=0.03$ and initial condition ${\bf m} =(0.5,  0.4, 0.1)$. We denote the part of the line \eqref{e:HopfBifurcationLine} where $\alpha \in [1.272, 1.3865]$ and $\beta = -\alpha +2.6585$ as ``inside'' the fold bifurcation curve; this is the boundary between Regions B and D (see Fig.~\ref{f:phasediagram-MLRPS}). (a)~Inside the curve, the solution tends to a triple-population fixed point. (b)~Outside the curve, the solution is periodic.}
\label{fig:RegCP_CuspSim}
\end{figure}

Thus, with the exception of the  point where $\alpha = \beta$, the above calculations are valid for almost all parameter values along the line $\beta_c(\alpha, \mu)$~\eqref{e:HopfBifurcationLine} and show that the Hopf bifurcation is supercritical. The analysis, however, does not distinguish between sections of the Hopf line that lie adjacent to Region  B (where the dynamics tends to a stable fixed point, as illustrated later in Fig.~\ref{fig:RegCP_CuspSim_Inside}) and those sections that are next to Region C (where we observe limit cycles, as illustrated in Fig.~\ref{fig:RegCP_CuspSim_Outside}).

Therefore, in order to justify the existence of limit cycles close to the Hopf bifurcation line~\eqref{e:HopfBifurcationLine} and within Region C, we first notice that in this part of parameter space the only equilibria that are present in the system are the equal-population fixed point, $e_c$~\eqref{e:fixedpointsML_equalpop}, and the fixed point at the origin, $e_0$~\eqref{e:fixedpointsML_zero}, both of which are unstable. In addition, we know that at the bifurcation point, the equal-population fixed point has two center directions and one stable direction. As a result, the center manifold for this equilibrium is attracting, and  the dynamics of the system near this point will remain in this locally invariant manifold. Because the bifurcation is supercritical, we then know that a limit cycle is formed.

On the other hand, we know that in sections of parameter space that are at the intersection of the Hopf bifurcation line~\eqref{e:HopfBifurcationLine} and Region B, the system has an additional six fixed points that emerge from the fold line (Fig.~\ref{fig:folds}), three of which are stable (Section~\ref{ss:regionB}). We suspect the  presence of these stable fixed points is what prevents the system from forming a  limit cycle, but we do not have a general proof for this result. 
We can, however, confirm using AUTO that in this region no periodic orbits bifurcate from the Hopf point.
This also holds in the degenerate case, when the parameters $\alpha$ and $\beta$ lie on the part of the Hopf line that borders Region B, and which in addition satisfy  $\alpha = \beta$. We explore the dynamics of the system for these particular values of the parameters in the following subsection.
 
\subsection{Region C$^\prime$}\label{ss:regionCprime}

Region C$^\prime$ encompasses the points lying on the Hopf bifurcation line~\eqref{e:HopfBifurcationLine}. To determine the system's dynamics at the degenerate point  ($\alpha = \beta$), which also lies on the line,
we perform a change of coordinates that highlights the periodic structure inherent in the system. In particular, we use the generalized cylindrical coordinates, which were introduced to study the time evolution of nonperiodic oscillations of the May--Leonard model \eqref{e:ML} in Phillipson~\cite{phillipson1984} and Phillipson et al.~\cite{phillipson-etal}.

We translate the linearly-perturbed May--Leonard equations~\eqref{e:MLperturb} to coordinates $x_i$ with respect to the equal-population fixed point $e_c$~\eqref{e:fixedpointsML_equalpop}, i.e., $x_i(t) = m_i(t)-1/(1+\alpha+\beta)$, and then utilize the generalized cylindrical coordinates $R$, $\theta$, and $Z$, via the transformation
\begin{subequations} \label{e:cylindricaltransformation}
\begin{align}
    x_1 &= 2 R \cos \theta +Z , \\
    x_2 &= -R \cos \theta -\sqrt{3} R \sin \theta + Z , \\
    x_3 &= -R \cos \theta + \sqrt{3} R \sin \theta + Z .
\end{align}
\end{subequations}
With these transformations, the linearly-perturbed May--Leonard model~\eqref{e:MLperturb} becomes
\begin{subequations} \label{e:MLperturb_cylindrical}
\begin{align}
    \frac{dR}{dt} &= (\lambda-3\mu) R -\sigma R^2\Big( \omega \sin(3\theta)-\lambda \cos(3\theta) \Big) \nonumber \\
    & \qquad - \left(\frac{\sigma+3}{2}\right) R Z , \label{e:MLperturb_cylindrical_R} \\
    \frac{d\theta}{dt} &= \omega - \sigma R\Big(\omega \cos(3\theta)+\lambda\sin(3\theta) \Big) + \sigma \omega Z , \label{e:MLperturb_cylindrical_theta} \\
    \frac{dZ}{dt} &= -Z -\sigma Z^2 + 2 \lambda \sigma R^2 , \label{e:MLperturb_cylindrical_Z}
\end{align}
\end{subequations}
where
\begin{equation}
    \sigma=1+\alpha+\beta, \quad \lambda =\frac{\alpha+\beta-2}{2 \com{(1+\alpha+\beta)}}, \quad \omega=\frac{\sqrt{3}(\beta-\alpha)}{2\com{(1+\alpha+\beta)}} .
\end{equation}
The only difference between \eqref{e:MLperturb_cylindrical} and the May--Leonard model~\eqref{e:ML} in cylindrical coordinates (Phillipson et al.~\cite{phillipson-etal}) is the first term in the $dR/dt$ equation~\eqref{e:MLperturb_cylindrical_R}, \mbox{$(\lambda-3\mu)R$}. Thus, since the other two equations \eqref{e:MLperturb_cylindrical_theta}--\eqref{e:MLperturb_cylindrical_Z} are exactly the same, we refer the reader to Phillipson et al.~\cite{phillipson-etal} for the case when $\mu=0$, and here we analyze how finding the fixed points of the cylindrical coordinates system \eqref{e:MLperturb_cylindrical} is modified when $\mu\neq0$.

Since Region C$^{\prime}$ is defined to be the Hopf bifurcation line $\beta_c(\alpha,\mu)$~\eqref{e:HopfBifurcationLine}, we set
\begin{equation} \label{e:HopfBifurcationLine_mu}
    \mu=\mu_c(\alpha,\beta) = \frac{\alpha+\beta-2}{6(1+\alpha+\beta)} ,
\end{equation}
which is equivalent to setting $\mu=\lambda/3$. Thus, the first term in the $dR/dt$ equation~\eqref{e:MLperturb_cylindrical_R} vanishes, and the system under investigation is 
\begin{subequations} \label{e:MLperturb_cylindrical_withmufixed}
\begin{align}
    \frac{dR}{dt} &= -\sigma R^2\Big( \omega \sin(3\theta)-\lambda \cos(3\theta) \Big) - \left(\frac{\sigma+3}{2}\right) R Z , \\
    \frac{d\theta}{dt} &= \omega - \sigma R\Big(\omega \cos(3\theta)+\lambda\sin(3\theta) \Big) + \sigma \omega Z , \label{e:MLperturb_cylindrical_withmufixed_theta} \\
    \frac{dZ}{dt} &= -Z -\sigma Z^2 + 2 \lambda \sigma R^2 .
\end{align}
\end{subequations}

We propose the following theorem regarding the dynamics of system~\eqref{e:MLperturb_cylindrical_withmufixed} in Region C$^{\prime}$.

\begin{theorem}
Fixed point solutions of \eqref{e:MLperturb_cylindrical_withmufixed} exist if and only if $\omega=0$ (equivalently, $\alpha=\beta$).
\end{theorem}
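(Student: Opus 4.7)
My plan is to analyze the three simultaneous algebraic equations obtained from $dR/dt=d\theta/dt=dZ/dt=0$ in \eqref{e:MLperturb_cylindrical_withmufixed}, eliminating the angular variable and then $R$ to reduce the question to a scalar condition on $Z$. Assuming $R>0$, dividing the $R$-equation by $R$ puts the $R$- and $\theta$-equations into the form of a $2\times 2$ linear system for $(\cos 3\theta,\sin 3\theta)$ whose coefficient matrix is $\sigma R\begin{pmatrix}\lambda & -\omega\\ \omega & \lambda\end{pmatrix}$ (of determinant $\sigma^{2}R^{2}(\lambda^{2}+\omega^{2})$). Squaring and adding these two equations, using the clean identity $(\lambda c-\omega s)^{2}+(\omega c+\lambda s)^{2}=(\lambda^{2}+\omega^{2})(c^{2}+s^{2})$, removes $\theta$ entirely and produces
\[
(\sigma R)^{2}(\lambda^{2}+\omega^{2})=\tfrac{(\sigma+3)^{2}}{4}Z^{2}+\omega^{2}(1+\sigma Z)^{2}.
\]
Substituting $2\lambda\sigma R^{2}=Z(1+\sigma Z)$ from the $Z$-equation and simplifying with the Hopf-line identity $\sigma(1-2\lambda)=3$ yields a quadratic $AZ^{2}+BZ+C=0$ whose coefficients $A,B,C$ depend only on $\lambda$, $\omega$, and $\sigma$.

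\medskip

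For the $(\Leftarrow)$ direction ($\omega=0$), the constant term $C\propto\omega^{2}$ vanishes and the quadratic factors as $Z\bigl[\,-\tfrac{9}{2}\lambda(\sigma+1)Z+\sigma\lambda^{2}\,\bigr]=0$, giving the nontrivial root $Z=\tfrac{2\sigma\lambda}{9(\sigma+1)}$. Back-substitution yields a positive $R^{2}$, and the $\theta$-equation collapses to $\sin 3\theta=0$, solvable at $3\theta\in\pi\Z$. This exhibits an explicit family of fixed points.

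\medskip

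For the $(\Rightarrow)$ direction ($\omega\neq 0$), I would compute the discriminant $B^{2}-4AC$ of the quadratic in $Z$ and aim to show it is strictly negative. Using the derived identity $\sigma(1-4\lambda)=6-\sigma$ (an immediate consequence of $\sigma(1-2\lambda)=3$), I expect the discriminant to collapse into a sign-definite expression of the form $4\sigma^{2}(\lambda^{2}-\omega^{2})^{2}$ minus a positive multiple of $\lambda^{2}\omega^{2}$, ruling out any $R>0$ fixed point on the relevant portion of Region C$^{\prime}$. The main obstacle will be precisely this algebraic collapse: the discriminant is initially a messy mixture of powers of $\lambda,\omega,\sigma$, and exhibiting the cancellations that render it negative requires careful use of the on-Hopf-line constraints. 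A further subtlety is that the coordinate singularity at $R=0$ admits the degenerate fixed point $(0,\theta_{0},-1/\sigma)$ for every $\omega$, corresponding to the Cartesian origin $e_{0}$; I would therefore interpret the theorem as asserting the nonexistence of genuine fixed points with $R>0$, and handle the $R=0$ stratum separately as a coordinate singularity where $\theta$ is undefined.
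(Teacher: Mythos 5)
Your elimination of $\theta$ is correct and is actually sharper than the paper's own argument, which simply asserts that fixed points must satisfy $3\theta\in\pi\Z$ (an unjustified step) and then works at $\theta=0$. Squaring and adding the $R$- and $\theta$-equations, substituting $2\lambda\sigma R^{2}=Z(1+\sigma Z)$, and using $\sigma(1-2\lambda)=3$ does produce a quadratic $AZ^{2}+BZ+C=0$ with $C=\omega^{2}$; the $(\Leftarrow)$ direction then goes through exactly as you describe and reproduces the fixed points \eqref{e:cylindrical_fixedpts_mufixed}; and your treatment of the $R=0$ stratum as a coordinate singularity is the right call. Moreover, your reduction is genuinely an equivalence: any real root $Z$ of the quadratic with $Z(1+\sigma Z)/(2\lambda\sigma)>0$ yields a bona fide fixed point, since the sum-of-squares identity forces the resulting $(\cos3\theta,\sin3\theta)$ onto the unit circle.

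The gap is in the $(\Rightarrow)$ direction, and it is not merely technical. The discriminant does collapse to the form you predict,
\[
B^{2}-4AC=\frac{\sigma^{2}\left(\lambda^{2}-\omega^{2}\right)^{2}-12\,(2\sigma+3)\,\lambda^{2}\omega^{2}}{4\lambda^{2}},
\]
but this expression is \emph{not} sign-definite: for $0<|\omega|\ll\lambda$ it tends to $\sigma^{2}\lambda^{2}/4>0$, the quadratic has two real roots, the larger root is a perturbation of $Z^{*}=(\sigma-3)/(9(\sigma+1))>0$, and by continuity $R^{2}>0$. Hence genuine fixed points with $R>0$ and $\omega\neq0$ exist. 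Concretely, at $\alpha=2$, $\beta=2.1$, $\mu=\mu_{c}\approx0.0686$ one finds $(R^{*},\theta^{*},Z^{*})\approx(0.1465,\,0.0164,\,0.0378)$, corresponding to the positive equilibrium $(m_{1},m_{2},m_{3})\approx(0.527,\,0.083,\,0.092)$ of \eqref{e:MLperturb}; note $\theta^{*}\notin\tfrac{\pi}{3}\Z$. So the forward implication cannot be proved by any argument, because it fails on the portion of the Hopf line lying inside the fold-bifurcation curve, where the six triple-population equilibria exist --- consistent with the paper's own Fig.~\ref{fig:RegCP_CuspSim_Inside}, which shows a trajectory at such parameter values converging to one of them. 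Your computation, carried through honestly, shows that the statement requires the extra hypothesis that $(\alpha,\beta)$ lie outside the fold curve (equivalently, that the discriminant above be negative); the paper's proof obscures this by resting on the unproved claim $\theta^{*}=\pi n/3$.
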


\begin{proof}

Trivial fixed points of the system \eqref{e:MLperturb_cylindrical_withmufixed} are of the form $(R^*,\theta^*,Z^*)=(0,\theta,0)$, where the angular coordinate $\theta$ is arbitrary and the condition $\omega=0$ must be satisfied. Since $R$ is a radial coordinate and $Z$ is a cylindrical coordinate, this corresponds to a stationary fixed point in the original coordinates $(m_1,m_2,m_3)$. In fact, by \eqref{e:cylindricaltransformation} and the definition of $x_i$, it corresponds to the equal-population fixed point, $e_c$~\eqref{e:fixedpointsML_equalpop}.

Positive fixed points of the system \eqref{e:MLperturb_cylindrical_withmufixed} with $\omega=0$ (equivalently, $\alpha=\beta$) are of the form
\begin{equation} \label{e:cylindrical_fixedpts_mufixed}
    (R^*,\theta^*,Z^*) = \left( \frac{\beta+2}{9(\beta+1)} , \quad \frac{\pi n}{3}, \quad \frac{\beta-1}{9(\beta+1)} \right), \quad n\in\mathbb{Z} .
\end{equation}
 Fig.~\ref{fig:RthetaZsimulation_muperturb_zero} indicates that solutions in the generalized cylindrical coordinates do not oscillate, but rather tend to one of the six fixed points given in~\eqref{e:cylindrical_fixedpts_mufixed}.
 These steady states correspond to the equilibria in the original $(m_1,m_2,m_3)$-coordinates that emerge from the fold bifurcation.

\begin{figure}[h] 
    \centering
    \subfigure[$\mu=\mu_c$ \label{fig:RthetaZsimulation_muperturb_zero}]{\includegraphics[width=0.7\columnwidth]{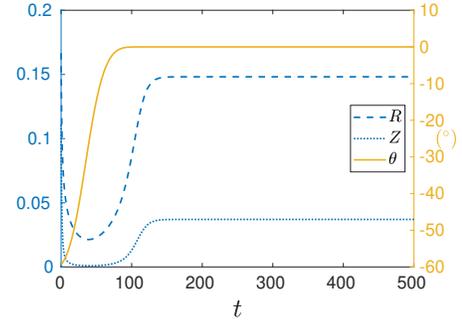}}
    \subfigure[$\mu=\mu_c+0.003$ \label{fig:RthetaZsimulation_muperturb_medium}]{\includegraphics[width=0.7\columnwidth]{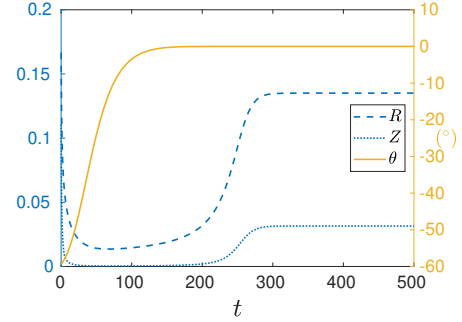}}
    \subfigure[$\mu=\mu_c+0.005$ \label{fig:RthetaZsimulation_muperturb_large}]{\includegraphics[width=0.7\columnwidth]{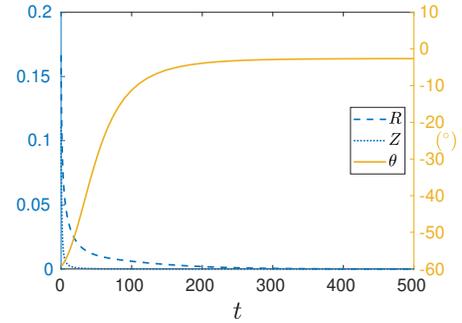}}
\caption{Simulation of the cylindrical coordinates system \eqref{e:MLperturb_cylindrical} with $\alpha=\beta=2$, $\mu_c=\mu_c(\beta,\beta)=0.0667$, and initial conditions $(R_0,\theta_0,Z_0)=(0.1666, -1.0372, 0.11075)$. The left vertical axis corresponds to $R$ and $Z$ and the right vertical axis corresponds to $\theta$, in degrees. (a)~$\mu=\mu_c=0.0667$: Positive fixed points are of the form \eqref{e:cylindrical_fixedpts_mufixed} with $(R^*,\theta^*,Z^*)=(0.14815,0,0.037037)$. The numerical simulation at $t=500$ predicts the same values. (b)~$\mu=\mu_c+0.003=0.0697$: Fixed points are of the form \eqref{e:MLperturb_cylindrical_fixedpts} with $(R^*_{-},Z^*_{-}) = (0.0097616,0.0001904)$ and $(R^*_{+},Z^*_{+}) = (0.13505,0.031513)$. The numerical simulation at $t=500$ predicts $(R^*,Z^*)=(R^*_{+},Z^*_{+})$. (c)~$\mu=\mu_c+0.005=0.0717$: The positive fixed point is lost and the system tends to $(R^*,\theta^*,Z^*)=(0,\theta,0)$ for an arbitrary $\theta$.}
\label{fig:RthetaZsimulation}
\end{figure}

Now assume that $\omega$ is not necessarily zero. First, notice that
the fixed points expression \eqref{e:cylindrical_fixedpts_mufixed} is similar in form to the fixed points expression found for the May--Leonard model~\eqref{e:ML} in generalized cylindrical coordinates (Phillipson et al.~\cite{phillipson-etal}). However, in parameter regions for which fixed points of the May--Leonard model in generalized cylindrical coordinates exist, the angular coordinate $\theta^*$ is equally realizable and the system never settles down to a fixed value for $\theta$. Here, since there are no oscillations in the solution (Fig.~\ref{fig:RthetaZsimulation_muperturb_zero}), the system tends to just one angular coordinate $\theta^*$.

To determine the relationship between $\beta$ and $\mu$ for which positive fixed points of the system \eqref{e:MLperturb_cylindrical} exist, Fig.~\ref{fig:RthetaZsimulation} illustrates that as $\mu$ increases, there is a range for which a positive fixed point exists (Fig.~\ref{fig:RthetaZsimulation_muperturb_medium}) and then vanishes (Fig.~\ref{fig:RthetaZsimulation_muperturb_large}). To find an analytic expression for these bounds, we first observe that since $\theta$ appears in the cylindrical coordinates system \eqref{e:MLperturb_cylindrical} in the trigonometric arguments as $(3\theta)$, fixed point solutions will require $\theta=\pi n/3$ for $n\in\mathbb{Z}$. Solving for the fixed points under the assumption that $\theta=0$, without loss of generality, we find that we must have $\omega=0$ for the equations to be satisfied. (Since the right-hand side of \eqref{e:MLperturb_cylindrical_withmufixed_theta} is a product of $\omega$ with another factor, if we take that factor to be equal to 0 and set $\omega\neq 0$, then we find that the fixed point satisfies $R^*=1/3$ and $Z^*=0$ with the condition that $\alpha+\beta=2$, which is equivalent to $\mu=0$. Thus, that system is equivalent to the May--Leonard model~\eqref{e:ML}.)
\end{proof}

Setting $\theta=0$ and $\alpha=\beta$ in \eqref{e:MLperturb_cylindrical}, we find that the fixed points satisfy
\begin{subequations} \label{e:MLperturb_cylindrical_fixedpts}
\begin{align}
    R^*_{\pm} &= \frac{\beta}{6 (1 + \beta)} - \frac{(1 + 2 \beta)\mu}{3 (1 + \beta)} \nonumber \\
    & \qquad \pm \frac{\sqrt{(\beta - 1) (2 + \beta)^2 (8 \mu^2 - 1 + \beta (1 - 4 \mu)^2)}}{6 (\beta^2 - 1)}, \\
    Z^*_{\pm} &= \frac{\beta - 1}{(2 + \beta) (1 + 2 \beta)} - \frac{3 \mu}{2 + \beta} + \frac{\beta - 1}{2 + \beta} R^*_{\pm} .
\end{align}
\end{subequations}
Hence, for a given $\beta \neq 1$ and $\mu$ and with $\theta=0$, there are at most two positive fixed points given by $(R^*,\theta^*,Z^*) = (R^*_{+},0,Z^*_{+})$ and $(R^*,\theta^*,Z^*) = (R^*_{-},0,Z^*_{-})$.
We plot the region in $\beta$--$\mu$ space for which both of these fixed points are nonnegative in Fig.~\ref{fig:cylindricalfixedpts}.

Furthermore, we observe in Fig.~\ref{fig:RthetaZsimulation_muperturb_zero}--\ref{fig:RthetaZsimulation_muperturb_medium} that $(R^*_{-},Z^*_{-})$ is attained as a local minimum early in the simulations and $(R^*_{+},Z^*_{+})$ is the maximum value attained as $t$ increases. For values of $\beta$ and $\mu$ that are within the shaded region illustrated in Fig.~\ref{fig:cylindricalfixedpts}, simulations in Fig.~\ref{fig:RthetaZsimulation_muperturb_zero}--\ref{fig:RthetaZsimulation_muperturb_medium} indicate that the system starts in the direction toward one location, but then tends toward a second location in the long term.
This reflects the fact that the fixed points $(R^*_{-},Z^*_{-})$  are unstable, while the fixed points $(R^*_{+},Z^*_{+})$ are stable.
Thus, we do not observe any oscillations in the generalized cylindrical coordinates system~\eqref{e:MLperturb_cylindrical}, let alone nonperiodic oscillations as observed in the May--Leonard model~\eqref{e:ML}.

\begin{figure}[h] 
    \centering
    \includegraphics[width=0.8\columnwidth]{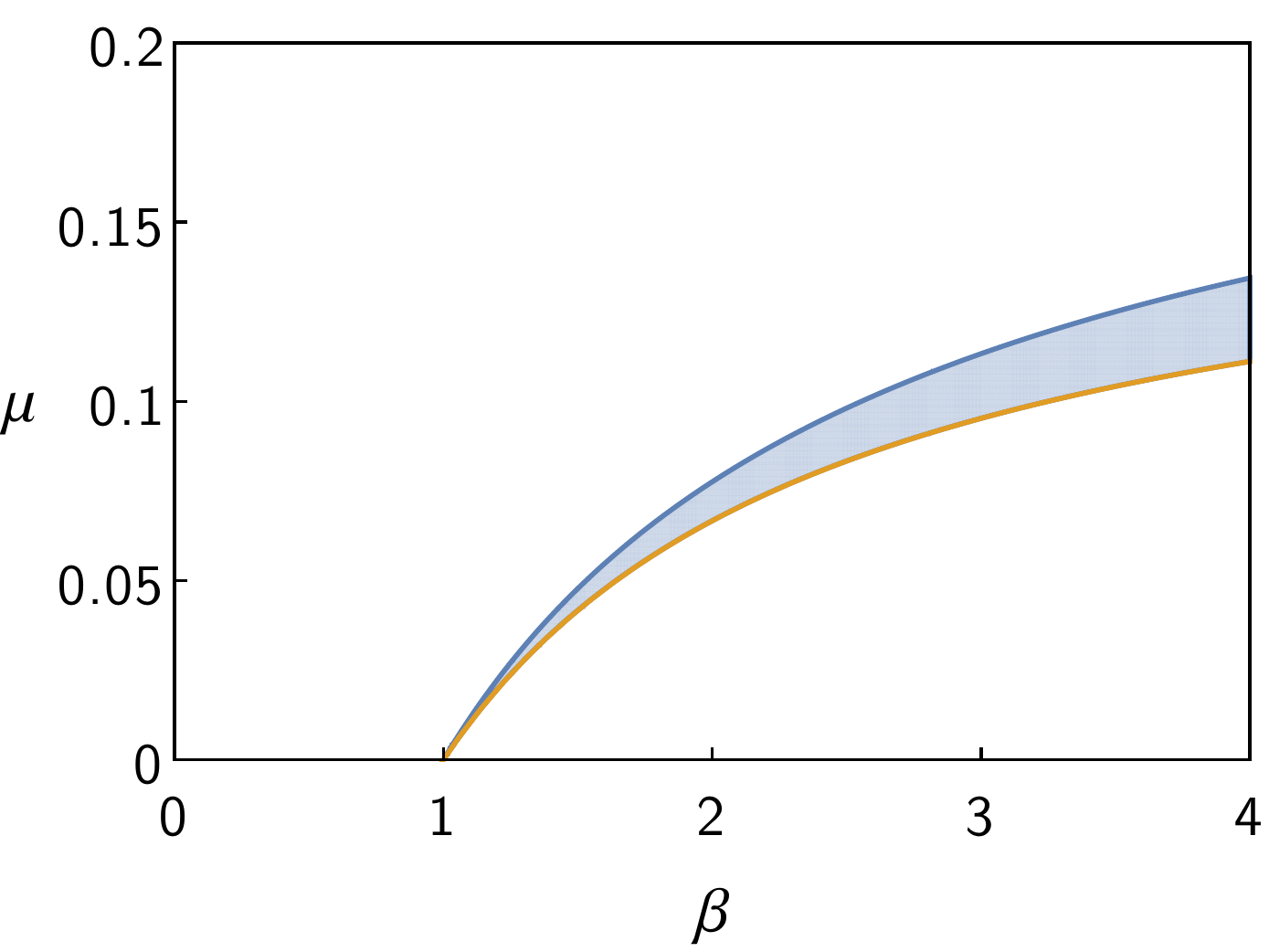}
\caption{Nonnegative fixed points in $\beta$--$\mu$ space. The shaded region shows the values of $\mu$ and $\beta$ for which all $R^*_{\pm}$ and $Z^*_{\pm}$ \eqref{e:MLperturb_cylindrical_fixedpts} are nonnegative, and thus system \eqref{e:MLperturb_cylindrical} with $\theta=0$ has nonnegative fixed points. The \com{orange} lower bounding curve is \eqref{e:HopfBifurcationLine_mu} with $\alpha=\beta$, i.e., $\mu=\mu_c(\beta,\beta)=(\beta-1)/(3(1+2\beta))$.}
\label{fig:cylindricalfixedpts}
\end{figure}

\subsection{Region B}\label{ss:regionB}

In this subsection, we focus on the number and stability of fixed points that exist for parameter values in Region B. \com{This region is the part of the $\alpha$--$\beta$ plane 
that is above the Hopf bifurcation line \eqref{e:HopfBifurcationLine}
and that is also enclosed by the bifurcation curve labeled 2) in Fig.~\ref{fig:folds}. }

\com{
We summarize our results for this region in Proposition \ref{p:equilibriaB}. We also justify why heteroclinic cycles are unlikely to occur in the linearly-perturbed model \eqref{e:MLperturb} in Remark \ref{r:heteroclinic}, and comment on some interesting dynamics seen in this region of parameter space in Remark \ref{r:periodic}.
}

\com{
\begin{prop}\label{p:equilibriaB}
For parameter values $\alpha$ and $\beta$ within Region B, the
linearly-perturbed May--Leonard Model \eqref{e:MLperturb} has
a total of eight nonnegative hyperbolic steady states: the trivial-population equilibrium, $e_0$~\eqref{e:fixedpointsML_zero}, and the equal-population equilibrium, $e_c$~\eqref{e:fixedpointsML_equalpop}, which are both unstable,
and an additional six fixed points that emerge through a fold bifurcation, three of which are stable.
\end{prop}
}

\com{
While we do not give an analytic expression for the curve of fold bifurcations, in  what follows we present numerical evidence of its existence.  We corroborate this result by computing a second order approximation (in the parameter $\mu$) for the equilibria that emerge as a result of this bifurcation and numerically determine their stability.}

\com{Our computations using the numerical continuation software AUTO 07~\cite{auto07p} show  that for a fixed and large enough value of $\beta$, a pair of positive equilibria emerge via a fold bifurcation as the parameter $\alpha$ is increased, (see Fig.~\ref{fig:folds}).}
These fixed points then coalesce in a second fold bifurcation when $\alpha$ is increased even further. Due to the symmetries inherent in the linearly-perturbed May--Leonard model~\eqref{e:MLperturb}, we conclude that there are actually three fold bifurcations that occur along these curves. 
As a result, in addition to the fixed point at the origin $e_0$~\eqref{e:fixedpointsML_zero} and the equal-population equilibrium $e_c$~\eqref{e:fixedpointsML_equalpop} (which are present for all values $\alpha, \beta>0$ and are both unstable in this region),
there are six other positive steady states inside Region B.

These six new equilibria also depend on $\mu$ in an interesting way. While one set of fixed points can be traced back to a family of single-population equilibria, \eqref{e:fixedpointsML_singlepop}, as $\mu \to 0$, the other set originates from a family of dual-population fixed points, \eqref{e:fixedpointsML_dualpop}. Indeed, this is confirmed in 
Appendix~\ref{sec:appendix}, where we compute a second-order approximation in $\mu$ for these steady states using a perturbation analysis.
In Fig.~\ref{fig:regionb}, we compare our analytic results with those obtained numerically using AUTO 07~\cite{auto07p} for parameter values $\mu =0.03$, $\beta =2$, and $\alpha \in (1,7)$. 
The continuation curve, which plots the $m_1$-component of all six equilibria versus the parameter $\alpha$, is represented as a black solid curve, while the second-order approximations are shown as dashed curves.
The blue dash-dotted curves correspond to components of the steady state that can be traced back to a single-population fixed point, while the red dashed curves emerge from a dual-population equilibrium.
Fig.~\ref{fig:regionb} illustrates that the second-order approximations (Appendix~\ref{sec:appendix}) closely estimate the numerically calculated continuation curves for values of $\alpha$ in a neighborhood of 3.

 \begin{figure}[h] 
   \centering
  \subfigure[]{ \includegraphics[width=0.8\columnwidth]{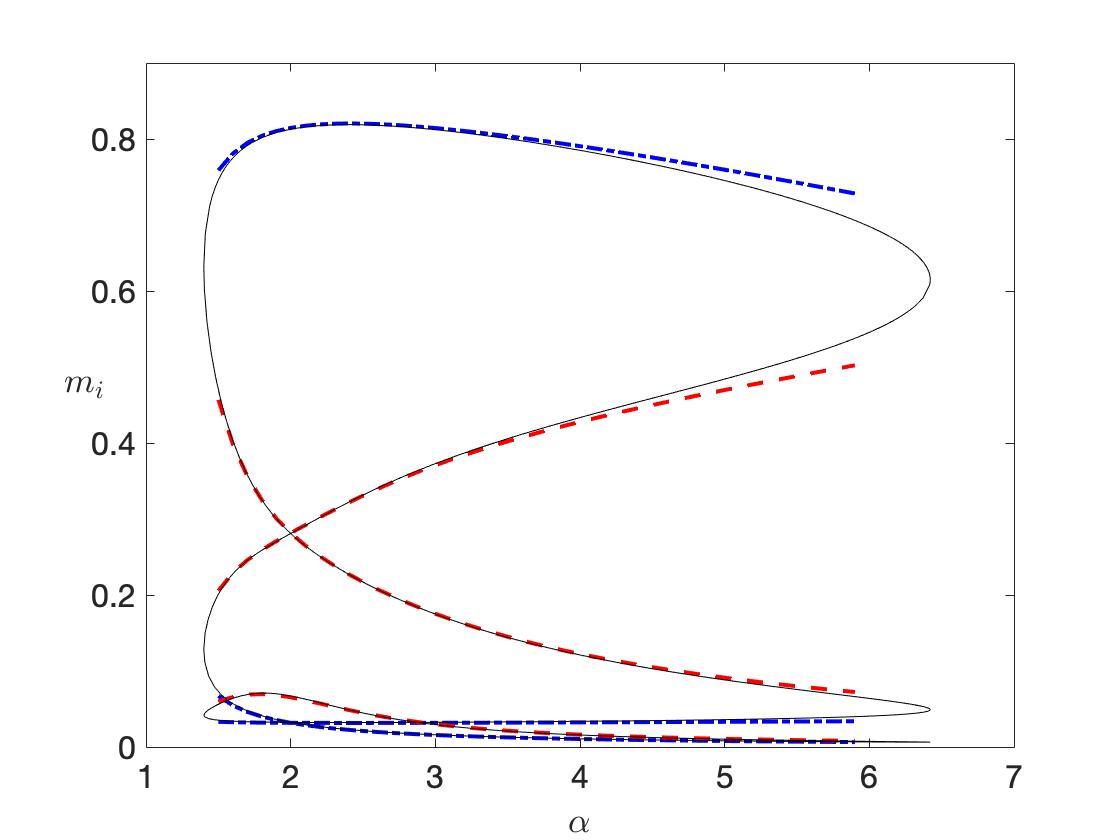} }
   \subfigure[]{\includegraphics[width=0.8\columnwidth]{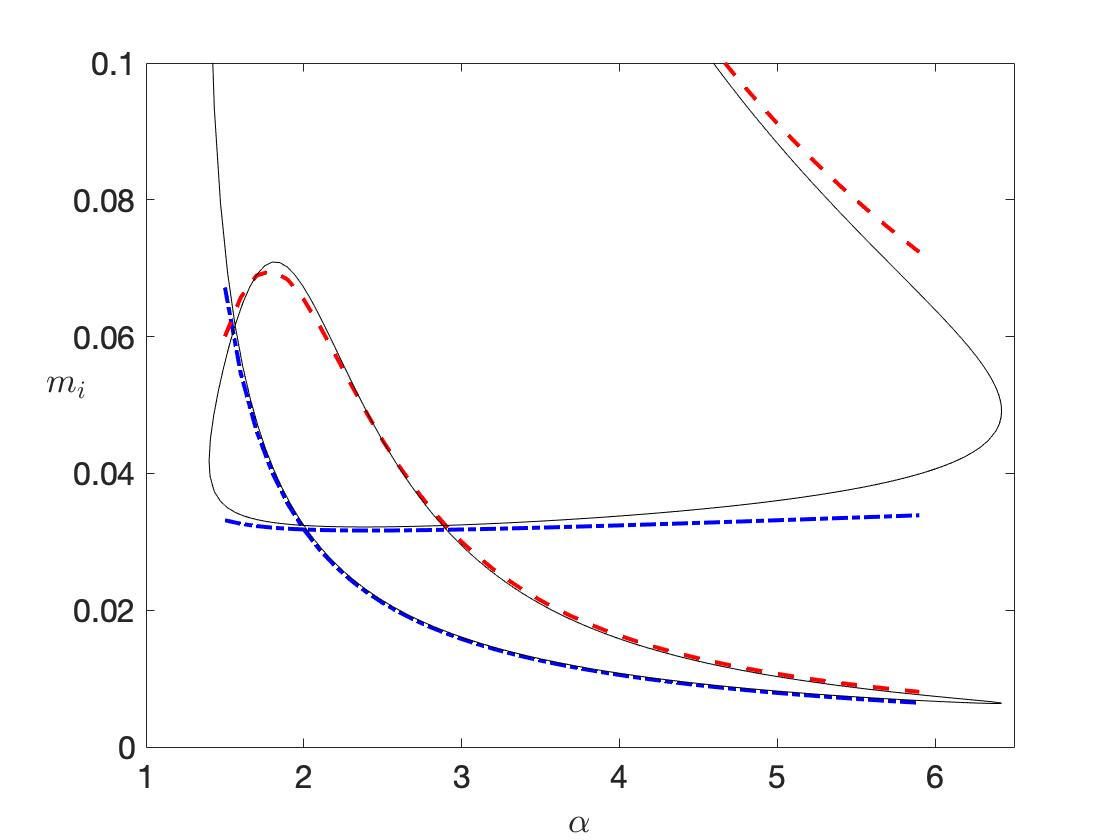} }
\caption{ (a) Continuation curves for the equilibria that emerge from the fold bifurcation obtained by varying $\alpha$, with $\mu =0.03$ and $\beta =2$ fixed. 
The solid curve represents the continuation curve obtained using AUTO 07~\cite{auto07p}. Dashed curves represent the second-order approximation
in $\mu$ (Appendix~\ref{sec:appendix}) for the fixed points that originated from a single-population equilibrium (blue dash-dotted curves)
and from a dual-population equilibrium (red dashed curves). (b) Zoom in on the bottom region of (a).}
\label{fig:regionb}
\end{figure}

Due to the symmetries present in the system, the continuation curve 
is also a plot of the other two components, $m_2$ and $m_3$, of the two fixed points that emerge from the fold bifurcation when $\beta=2$ and $\alpha$ is small.

The first bifurcation at $\alpha\approx 1.3993$  corresponds to the left-most leg of curve 2) shown in Fig.~\ref{fig:folds}. Then, as the value of $\alpha$ is increased, these steady states coalesce in a second fold bifurcation at $\alpha\approx 6.4363$. This corresponds to the right-most leg of curve 2) in Fig.~\ref{fig:folds}.

We also studied the stability of these six new fixed points numerically. 
These results are summarized in Fig.~\ref{fig:stabregionb}, where we see that the equilibria 
that emerged from the single-population steady state~\eqref{e:fixedpointsML_singlepop} are stable, while the equilibria corresponding to
the dual-population fixed point~\eqref{e:fixedpointsML_dualpop} have one unstable direction. As a result, the dynamics inside Region B are determined by the initial conditions.

\begin{figure}[h] 
   \centering
   \subfigure[]{\includegraphics[width=0.8\columnwidth]{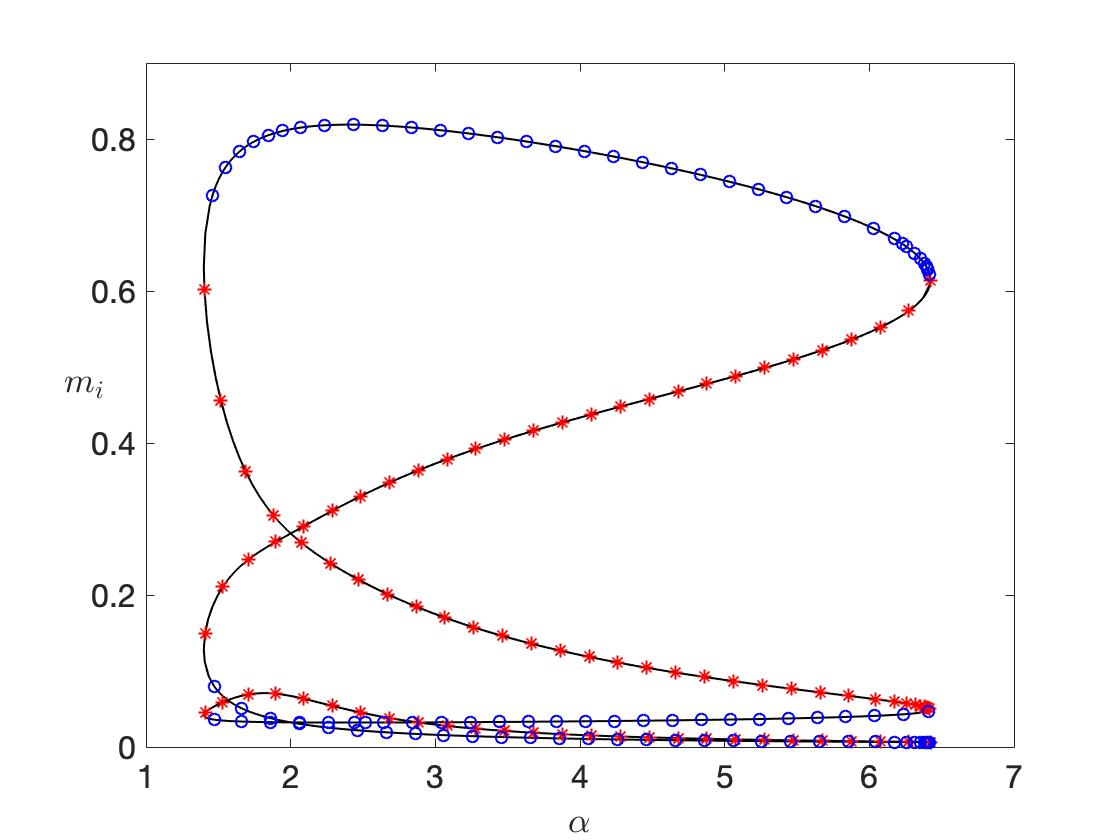}} 
   \subfigure[]{ \includegraphics[width=0.8\columnwidth]{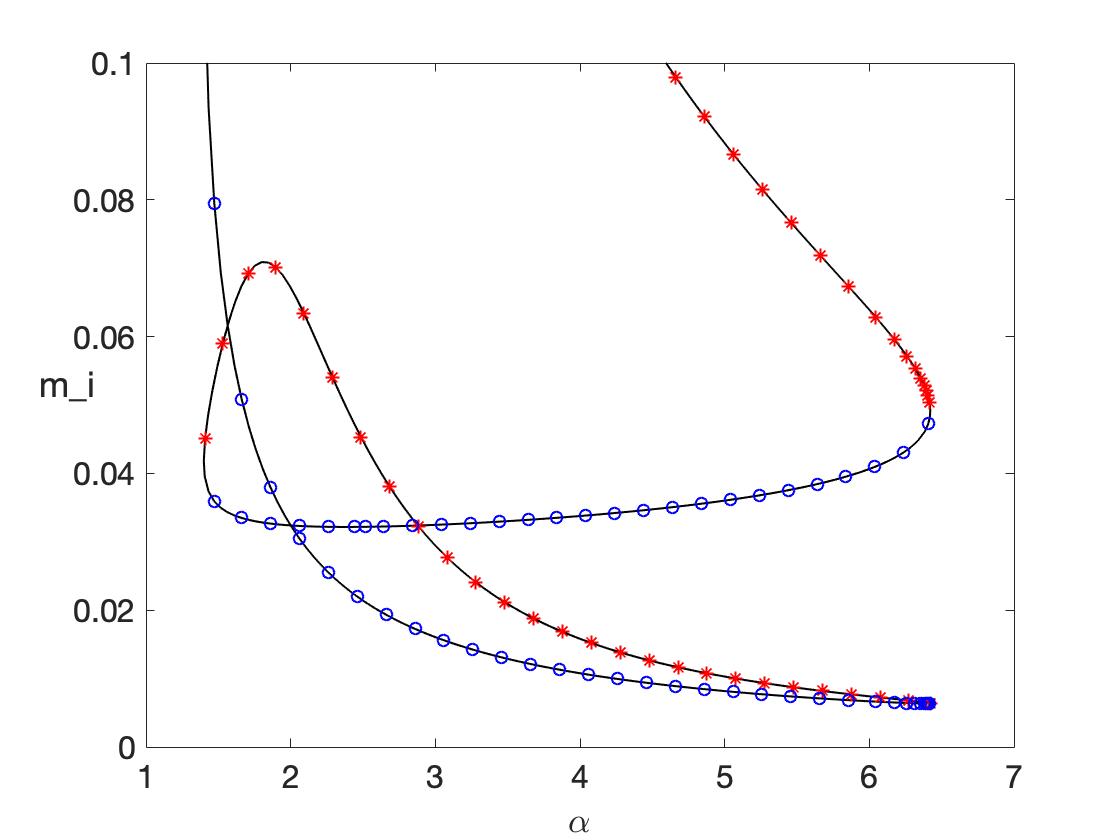} }
   \caption{(a) Stability of equilibria that emerge from \com{the} fold bifurcation obtained by varying $\alpha$, with 
$\mu =0.03$ and $\beta =2$ fixed. 
The solid curve represents the continuation curve obtained using AUTO~07~\cite{auto07p}. Open blue circles represent stable equilibria, 
while red stars represent equilibria with one unstable direction. (b) Zoom in on the bottom region of (a).}
\label{fig:stabregionb}
\end{figure}

\com{
\begin{Remark}\label{r:heteroclinic}
   We notice that for positive values of the parameter $\mu$, the linearly-perturbed May--Leonard model~\eqref{e:MLperturb} does not possess heteroclinic cycles. We subsequently explain this behavior.
\end{Remark}
}

\begin{figure}[h] 
   \centering
   \subfigure[\label{fig:regionb2_subplota}]{\includegraphics[width=0.8\columnwidth]{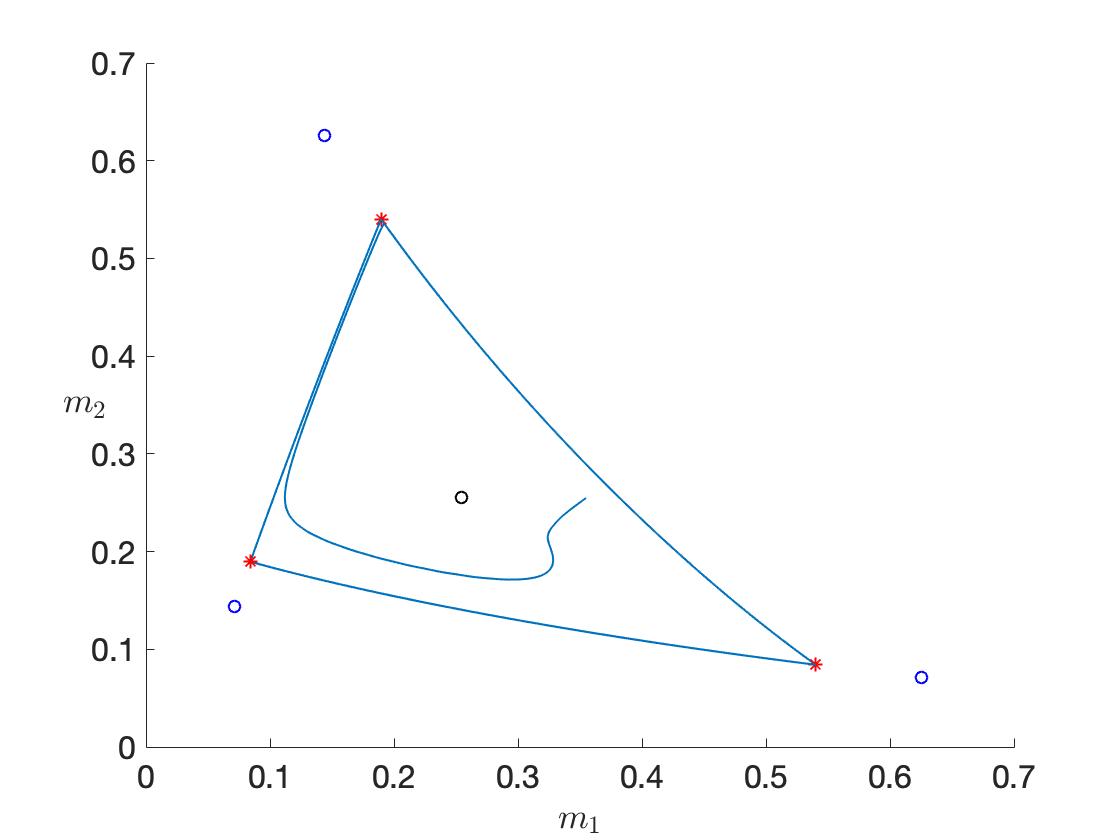}}
   \subfigure[\label{fig:regionb2_subplotb}]{\includegraphics[width=0.8\columnwidth]{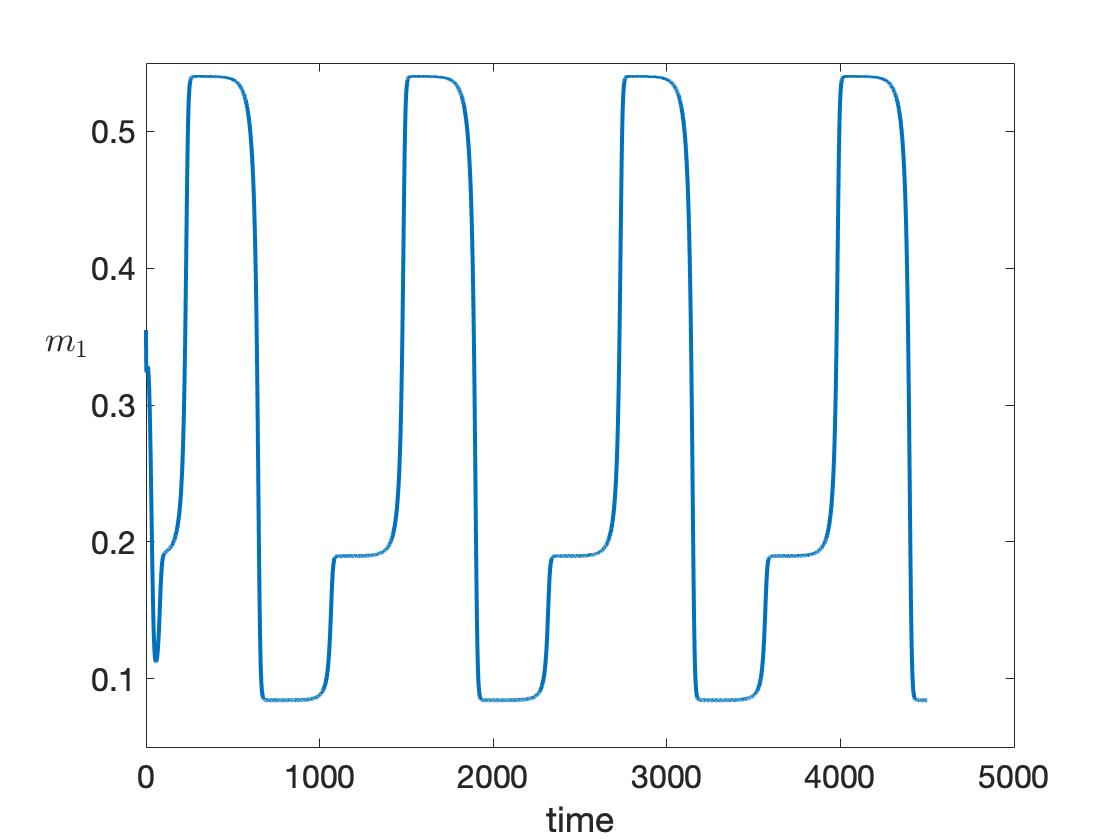} }
  
\caption{(a) Periodic solution appearing for \com{the} parameter values $\alpha = 1.3224$, $\beta = 1.6018$, $\mu= 0.03$, and initial condition \mbox{${\bf m} =( 0.3548,  0.2548, 0.2548)$}. The blue circles represent stable fixed points, while the red stars represent unstable fixed points. (b) Plot of the first component $m_1$  vs. time.}
  \label{fig:regionb2}
\end{figure}

    The heteroclinic connections that exist in the original May--Leonard model~\eqref{e:ML}, and which join the single-population equilibria, live in the invariant coordinate planes $m_i=0$, $i=1,2,3$. In these invariant sets, two single-population fixed points appear as a saddle and sink, and one can prove that their respective unstable and stable manifolds intersect transversely (Schuster et al.~\cite{schuster1979}). In the linearly-perturbed model~\eqref{e:MLperturb}, saddle fixed points only appear in Region B in Fig.~\ref{f:phasediagram-MLRPS}. Since these fixed points correspond to triple-population fixed points, they no longer lie on the coordinate planes. In addition, these planes are no longer invariant sets as soon as $\mu$ becomes positive. Consequently, the heteroclinic connections need to occur in $\R^3$. Since each fixed point has a 2-dimensional stable manifold and 1-dimensional unstable manifold, it then follows that the intersection of these manifolds is no longer robust. \com{As a result, it is unlikely that the heteroclinic cycle persists when $\mu>0$.}

\com{
\begin{Remark}\label{r:periodic}
Interestingly, when the values of $\alpha$ and $\beta$ are near the fold bifurcation, but still inside Region B, we find periodic trajectories that persist under small perturbations of the initial conditions and the parameters (Fig.~\ref{fig:regionb2}). However, when $\alpha$ and $\beta$ are well within Region B, these cyclic solutions are lost and trajectories approach one of the three stable equilibria. We suspect that these periodic solutions are the remnants of the nonperiodic trajectories that approach the heteroclinic cycle in the original May--Leonard model~\eqref{e:ML}. 
\end{Remark}
}

\subsection{Region D}\label{ss:regionD}

\com{Region D is the part of the $\alpha$--$\beta$ plane that lies below the Hopf bifurcation line \eqref{e:HopfBifurcationLine} and that is also enclosed by the bifurcation curve labeled 2) in Fig.~\ref{fig:folds}.
We summarize our findings for this region in the following proposition and present numerical evidence for these results.

\begin{prop}
For parameter values $\alpha$ and $\beta$ within Region D, the
linearly-perturbed May--Leonard model \eqref{e:MLperturb} has
a total of eight nonnegative hyperbolic steady states: the trivial-population equilibrium, $e_0$~\eqref{e:fixedpointsML_zero}, which is unstable, the equal-population equilibrium, $e_c$~\eqref{e:fixedpointsML_equalpop}, which is stable,
and an additional six fixed points that emerge through a fold bifurcation, three of which are stable.
\end{prop}

The bifurcation diagram illustrated in Fig.~\ref{fig:folds} justifies the results stated in the above proposition. Below the Hopf line, the equal-population equilibrium, $e_c$~\eqref{e:fixedpointsML_equalpop}, is stable, while above the bifurcation curve labeled 2) in Fig.~\ref{fig:folds}, numerical continuation, together with the results in Appendix A, indicates the emergence of six new steady states.
The stability of these fixed points was tested numerically and a similar plot as shown in Fig. \ref{fig:stabregionb} was obtained (not shown). Thus, we conclude that three of the six fixed points are stable.
}

\com{As an illustration of the dynamics in Region D}, in Fig.~\ref{fig:regiond}, we plot only the positive equilibria and a sample trajectory in the $m_1$--$m_2$ plane for values of $\alpha =1.3$, $\beta =1.3$, and $\mu =0.03$.  The equal-population equilibrium $e_c$~\eqref{e:fixedpointsML_equalpop} is surrounded by the family of three unstable fixed points, while the second set of stable steady states appear in the outskirts of the plot (Fig.~\ref{fig:regiond_subplota}). 
\begin{figure}[t] 
   \centering
   \subfigure[\label{fig:regiond_subplota}]{\includegraphics[width=0.8\columnwidth]{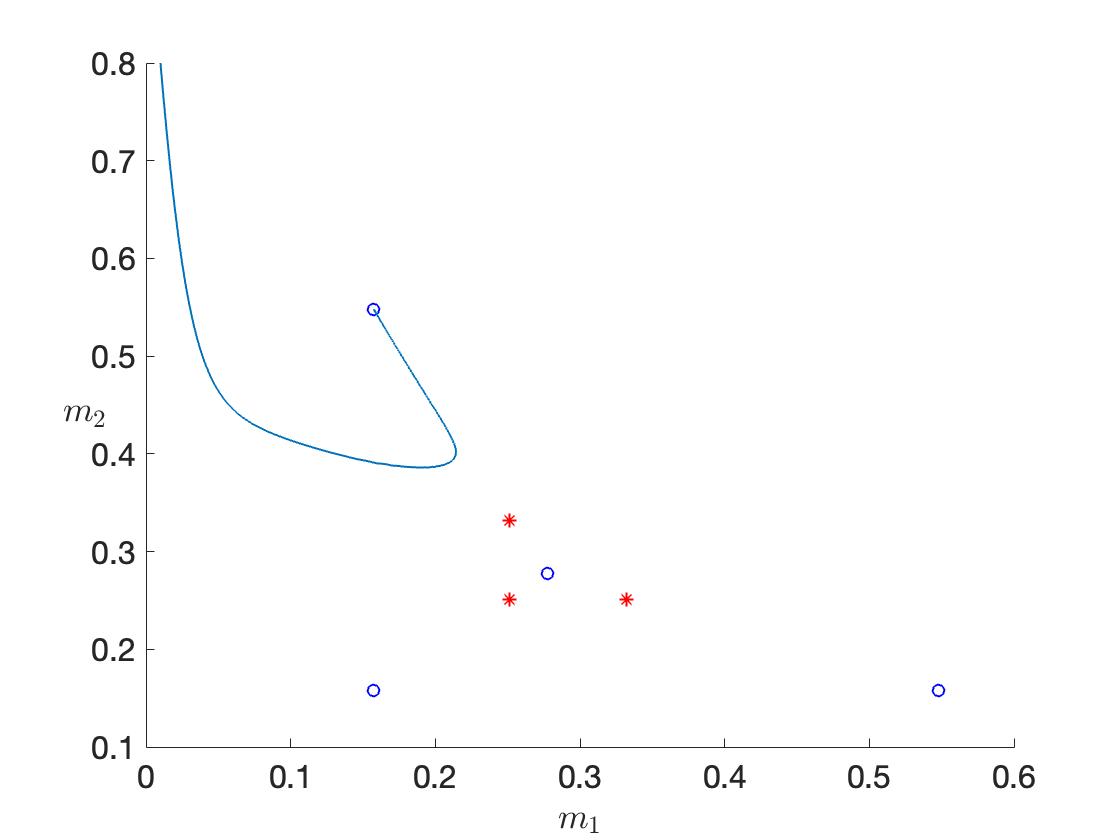} }
   \subfigure[]{ \includegraphics[width=0.8\columnwidth]{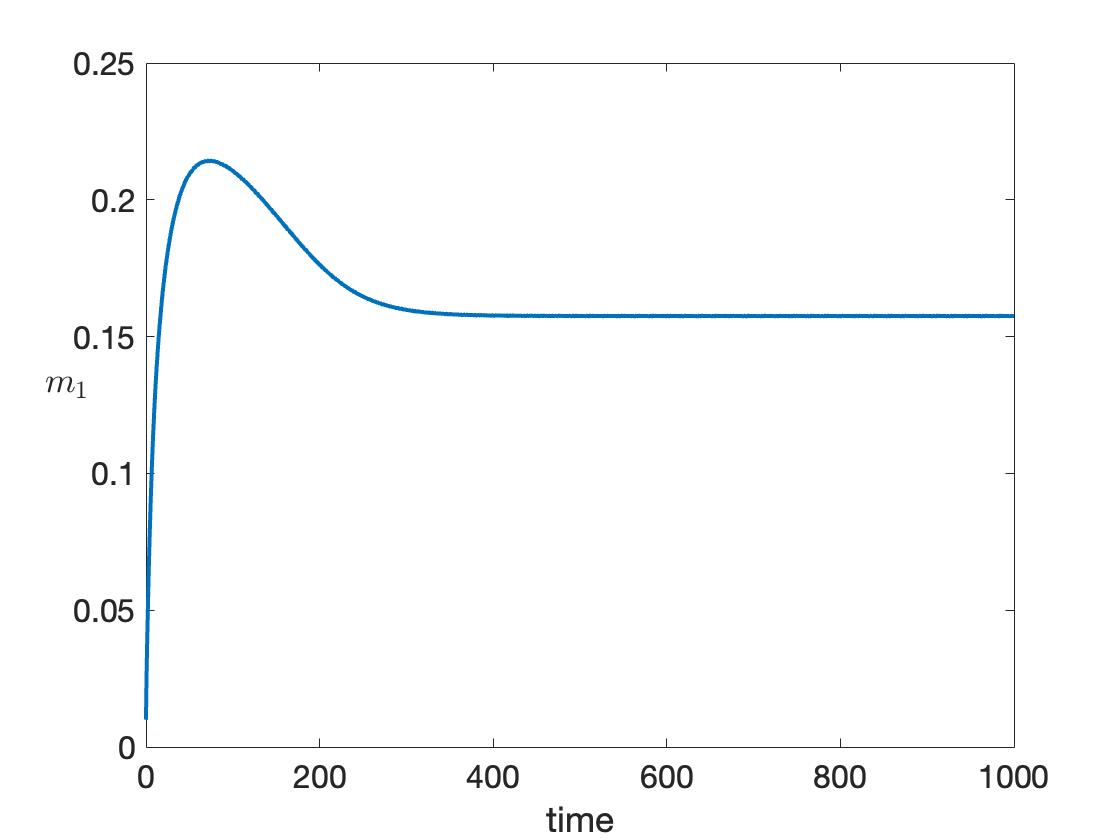} }
   \caption{(a) Sample trajectory for \com{the} parameter values $\alpha =1.3$, $\beta=1.3$, and $\mu= 0.03$, with initial condition ${\bf m} = (0.01,0.8,0.7)$. \com{The} blue circles represent stable fixed points, while the red stars represent unstable fixed points. (b) Plot of the first component $m_1$  vs. time.}
   \label{fig:regiond}
\end{figure}

\section{Discussion} \label{sec:discussion}

In this work, we considered the effects of adding a linear perturbation to the three species competition \com{model set forth by May and Leonard}~\cite{may-leonard} and identified changes in the resulting dynamics of the system.
In particular, we focused on linear  perturbations representing global mutations, where each of the three species in the model can mutate into the other two with the same constant rate (see also Tuopo and Strogatz~\cite{toupo-strogatz}). As a result, the perturbed May--Leonard model~\eqref{e:MLperturb} has linear and quadratic terms describing the competition among and mutation between three species.

Not surprisingly, we found that adding a linear term to the equations changed the number and structure of 
the system's fixed points. While both the original and the linearly-perturbed models possess the trivial and equal-population equilibria, the single- and dual- population steady states found in the original May--Leonard system~\eqref{e:ML} are no longer present in our model~\eqref{e:MLperturb}.  Instead,  adding global mutations results in six triple-population fixed points,  some of which have negative components. 
In this paper, we studied \com{the existence and stability of those equilibria which possess only nonnegative components, since they are the only physically relevant fixed points.} This was done using a combination of mathematical analysis and numerically-produced bifurcation diagrams. 

Our results are summarized in a stability diagram (Fig.~\ref{f:phasediagram-MLRPS}), which splits \com{the $\alpha$--$\beta$} parameter space into four distinct regions labeled A, B, C, and D. In Region A, we found that the linearly perturbed system \eqref{e:MLperturb} has only two nonnegative steady states, the \com{trivial-} and equal-population fixed points. We proved that the equal-population equilibrium is the only stable fixed point in this region. We also showed that as one moves from Region A into Region C, the equal-population equilibrium undergoes a supercritical Hopf bifurcation. As a result, the system exhibits periodic solutions, which we showed persist far from the bifurcation line (labeled  C$^\prime$). In Region B, global mutations give rise to six triple-population steady states, all of which have positive components. \com{Three of these equilibria are stable, thus the final state of the system in this region depends on the initial conditions.} 
Finally, in Region D, all eight possible equilibria have nonnegative components. In this region, the equal-population fixed point as well as three of the triple-population steady states are stable. Consequently, the long-term dynamics of the system in Region D also depend on the initial conditions.

The results presented here are specific to the May--Leonard model, which is symmetric with respect to cyclic permutations of the variables, and for linear perturbations that respect this symmetry. We have shown that when this perturbation is small enough, the stability diagram of the modified system~\eqref{e:MLperturb} closely resembles that of the original model~\eqref{e:ML}. In particular, our results show that while the locations of the fixed points 
shift by a small amount 
\com{when small global mutations are added},
their stability is preserved by such a perturbation. Consequently,  the heteroclinic cycle that is present in the original May--Leonard equations~\eqref{e:ML} disappears when
\com{considering the linearly-perturbed system~\eqref{e:MLperturb}.}
This happens not because the perturbation breaks the \com{cyclic symmetry of the system}, but \com{rather} because it dislodges the fixed points from the invariant simplex \com{where the heteroclinic connections take place (i.e., the equilibria no longer lie in the plane $m_1+m_2+m_3 =N$, where $N$ represents the total population).} Similar behavior should be expected for other $n$-dimensional systems, 
provided that the heteroclinic cycle lies in an $(n-1)$-dimensional invariant set and that the added linear perturbation is small and respects the  cyclic symmetry of the system.

As mentioned in the introduction \com{(Section~\ref{sec:introduction})}, the May--Leonard model~\eqref{e:ML} closely resembles the replicator equations used in evolutionary game theory. Indeed, both systems use a cyclic dominance competition pattern, \com{however,} while the May--Leonard model tracks the total population of the different species, replicator games focus on population densities. As a result, when global mutations are introduced,
  the phase diagram of the perturbed May--Leonard model~\eqref{e:MLperturb} (Fig.~\ref{f:phasediagram-MLRPS}) has a much richer structure compared to the phase diagram of the rock--paper--scissors game found in Toupo and Strogatz~\cite{toupo-strogatz}. 
In particular, our results show that in addition to enlarging the region of parameter space where cyclic behavior can be expected, a linear perturbation modeling cyclic mutation also foments coexistence of species: it gives rise to strictly positive steady states that are distinct from the equal-population equilibrium. In biological terms, this would imply that allowing species \com{to switch} from one strategy to another with a small transition, or mutation rate, can favor biodiversity.
\com{Because one can view the linear perturbation as a cooperative force in the modified system~\eqref{e:MLperturb}, our results also corroborate observations that coexistence is stabilized in cooperative systems.}

 Finally, although 
the simplex, $m_1+m_2+m_3 = N$, is no longer an attracting set for the  linearly-perturbed May--Leonard system~\eqref{e:MLperturb}, we strongly suspect that a similar invariant object exists. Indeed, our numerical simulations suggest the presence of a compact and attracting 2-D manifold. However, analytically proving the existence of such a carrying simplex remains an open question. 
Notice, though, that the
existence of an attracting 2-D manifold, together with the Poincar{\'e}--Bendixson theorem, would imply that trajectories of the modified equations~\eqref{e:MLperturb} can only approach a stable fixed point, a periodic orbit, or a heteroclinic cycle. Nonetheless,
it is possible that when extending the equations to the 4-D case, the dynamics of the system become chaotic. 
For example, previous numerical work by Wang and Xiao~\cite{wang-xiao} demonstrates that periodic solutions in the 4-D Lotka--Volterra system 
can undergo successive period-doubling cascades. 
It would be interesting to see if similar chaotic behavior is present in a 4-D linearly-perturbed May--Leonard model. We leave these and related musings as open questions and future work.


\begin{acknowledgments}
G.J. acknowledges support from NSF grant DMS-1911742. T.L.S. acknowledges support from a Simons Collaboration Grant for Mathematicians (\verb|#|710482) and NSF grant DMS-2151566. 

The authors would like to thank the anonymous reviewers for their careful reading and many helpful suggestions.

\end{acknowledgments}

\section*{Author Declarations}
\subsection*{Conflict of Interest}
The authors have no conflicts to disclose.
\subsection*{Author Contributions}
All authors contributed equally to this work.
\section*{Data Availability}

The data that support the findings of this study are openly available in a GitHub repository at \url{https://github.com/tstepien/linearly-perturbed-May-Leonard}  [v1.0.0]. The source code is platform independent and written in MATLAB and Mathematica.

\appendix

\section{Second-Order Approximation of Fixed Points} \label{sec:appendix}

For small values of $\mu$, we obtain an 
expression for the equilibria of the linearly-perturbed May--Leonard model~\eqref{e:MLperturb} using
a perturbation analysis together with 
the software system Mathematica.

First, we find expansions for the equilibria that bifurcate from the single-population fixed points $e_i$~\eqref{e:fixedpointsML_singlepop}. We work on $e_1$; equilibria corresponding to $e_2$ and $e_3$ can be obtained by permutation because of the symmetry in the system.  Setting $U =(m_1, m_2,m_3)$, we write
\begin{equation*}
    U(\mu) = U_0 + \mu U_1 + \mu^2 U_2 + \mu^3 U_3 + \cdots ,
\end{equation*}
with $U_0 = e_1$~\eqref{e:fixedpointsML_singlepop}. Inserting this ansatz into system  \eqref{e:MLperturb}, one finds that \mbox{$U_1= (u_1,v_1,w_1)$} is given by
\begin{subequations}
\begin{align*}
    v_1 &= \frac{1}{\beta-1} , \\
    w_1 &= \frac{1}{\alpha-1}, \\
    u_1  &= -\alpha v_1 -\beta w_1 -2 = \frac{\alpha}{1-\beta} +\frac{\beta}{1-\alpha} - 2,
\end{align*}
\end{subequations}
while $U_2= (u_2,v_2,w_2)$ has the components
\begin{subequations}
\begin{align*}
   v_2 &= \frac{\alpha}{(\beta-1)^3}+\frac{2\beta-1}{(\alpha-1)(\beta-1)^2}+\frac{1-\alpha\beta}{(\alpha-1)(\beta-1)^3}, \\
    w_2 &= \frac{\beta}{(\alpha-1)^3}+\frac{2\alpha-1}{(\alpha-1)^2(\beta-1)}+\frac{1-\alpha\beta}{(\alpha-1)^3(\beta-1)}, \\
    u_2 & = \frac{\alpha+\beta-2}{(\alpha-1)(\beta-1)} - \alpha v_2 -\beta w_2 \nonumber\\
    & = \frac{1}{1-\alpha}+\frac{\beta(\alpha-\beta)}{(\alpha-1)^3}-\frac{3+\alpha}{(\alpha-1)(\beta-1)}\nonumber \\
    & \qquad +\frac{\alpha}{(\beta-1)^2}-\frac{\alpha(\alpha-1)}{(\beta-1)^3}.
\end{align*}
\end{subequations}

A similar analysis allows us to find expansions for the equilibria that bifurcate from the dual-population fixed points $f_i$~\eqref{e:fixedpointsML_dualpop}. We work on $f_3$, noting that the other equilibria can be obtained by symmetry.
The components of $U_1= (u_1,v_1,w_1)$ are given by
\begin{subequations}
\begin{align*}
    w_1 & =  \frac{2 -\alpha - \beta}{\alpha\beta-1-\alpha(\alpha-1)-\beta(\beta-1)},\\
    u_1 & = \frac{1-3\alpha+\alpha^3+4\alpha\beta-2\alpha^2\beta-\beta^2}{(\alpha-1)(\beta-1)(\alpha\beta-1)}-\frac{\alpha^2-\beta}{\alpha\beta-1}w_1, \\
    v_1 & = \frac{1-\alpha^2-3\beta+4\alpha\beta-2\alpha\beta^2+\beta^3}{(\alpha-1)(\beta-1)(\alpha\beta-1)}+\frac{\alpha-\beta^2}{\alpha\beta-1}w_1, 
\end{align*}
\end{subequations}
and $U_2= (u_2,v_2,w_2)$ is given by
\begingroup
\allowdisplaybreaks
\begin{subequations}
\begin{align*}
    \begin{split}
    w_2 & = \frac{(\alpha\beta-1)(1+\alpha-2\beta)(1-2\alpha+\beta)(-2+\alpha+\beta)}{(\alpha-1)(\beta-1)[\alpha\beta-1-\alpha(\alpha-1)-\beta(\beta-1)]^2},
    \end{split}\\
    \begin{split}
    u_2  =& \frac{(2-\alpha-\beta)(1-3\alpha+4\alpha^2-2\alpha^3+2\alpha^4-2\alpha^5+\alpha^6-3\beta)}{(\alpha-1)^3(\beta-1)^3(\alpha\beta-1-\alpha(\alpha-1)-\beta(\beta-1))}\\
    & + \frac{(2-\alpha-\beta)(8\alpha\beta-12\alpha^2\beta+\alpha^3\beta+4\alpha^4\beta-3\alpha^5\beta)}{(\alpha-1)^3(\beta-1)^3(\alpha\beta-1-\alpha(\alpha-1)-\beta(\beta-1))}\\
    & + \frac{(2-\alpha-\beta)(3\beta^2-4\alpha\beta^2+15\alpha^2\beta^2-8\alpha^3\beta^2+4\alpha^4\beta^2)}{(\alpha-1)^3(\beta-1)^3(\alpha\beta-1-\alpha(\alpha-1)-\beta(\beta-1))}\\
    & + \frac{(2-\alpha-\beta)(-2\beta^3-5\alpha\beta^3-2\alpha^2\beta^3-\alpha^3\beta^3+2\beta^4)}{(\alpha-1)^3(\beta-1)^3(\alpha\beta-1-\alpha(\alpha-1)-\beta(\beta-1))}\\
    & + \frac{(2-\alpha-\beta)(3\alpha\beta^4-\beta^5)}{(\alpha-1)^3(\beta-1)^3(\alpha\beta-1-\alpha(\alpha-1)-\beta(\beta-1))}\\
    & -\frac{\alpha^2-\beta}{\alpha\beta-1} w_2,
    \end{split}\\
    \begin{split}
    v_2  =& \frac{(2-\alpha-\beta)(1-3\alpha+3\alpha^2-2\alpha^3+2\alpha^4-\alpha^5-3\beta)}{(\alpha-1)^3(\beta-1)^3(\alpha\beta-1-\alpha(\alpha-1)-\beta(\beta-1))}\\
    & + \frac{(2-\alpha-\beta)(8\alpha\beta-4\alpha^2\beta-5\alpha^3\beta3\alpha^4\beta+4\beta^2)}{(\alpha-1)^3(\beta-1)^3(\alpha\beta-1-\alpha(\alpha-1)-\beta(\beta-1))}\\
    & + \frac{(2-\alpha-\beta)(-12\alpha\beta^2+15\alpha^2\beta^2-2\alpha^3\beta^2-2\beta^3)}{(\alpha-1)^3(\beta-1)^3(\alpha\beta-1-\alpha(\alpha-1)-\beta(\beta-1))}\\
    & + \frac{(2-\alpha-\beta)(\alpha\beta^3-8\alpha^2\beta^3-\alpha^3\beta^3+2\beta^4+4\alpha\beta^4)}{(\alpha-1)^3(\beta-1)^3(\alpha\beta-1-\alpha(\alpha-1)-\beta(\beta-1))}\\
    & + \frac{(2-\alpha-\beta)(4\alpha^2\beta^4-2\beta^5-3\alpha\beta^5+\beta^6)}{(\alpha-1)^3(\beta-1)^3(\alpha\beta-1-\alpha(\alpha-1)-\beta(\beta-1))}\\
    & +\frac{\alpha-\beta^2}{\alpha\beta-1} w_2.
    \end{split}
\end{align*}
\end{subequations}
\endgroup
\section*{References}
\nocite{*}
\bibliography{ML-perturb-bib}

\end{document}